\theoremstyle{plain}
\newtheorem{theorem}{Theorem}[section]
\newtheorem{corollary}[theorem]{Corollary}
\newtheorem{lemma}[theorem]{Lemma}
\newtheorem{proposition}[theorem]{Proposition}
\theoremstyle{definition}
\newtheorem{definition}[theorem]{Definition}
\newtheorem{example}[theorem]{Example}
\newtheorem{question}[theorem]{Question}
\theoremstyle{remark}
\newtheorem{remark}[theorem]{Remark}
\newcommand{\cl}{\overline}
\DeclareMathOperator{\cll}{cl}
\newcommand{\B}{\mathcal{B}}
\newcommand{\U}{\mathcal{U}}
\newcommand{\V}{\mathcal{V}}
\newcommand{\R}{\mathbb{R}}
\DeclareMathOperator{\co}{co}
\title{Topological Properties of the Space of Convex Minimal Usco Maps}
\author{Ľubica Holá and Branislav Novotný}
\address{Mathematical Institute, Slovak Academy of Sciences, Štefánikova 49, SK-814 73 Bratislava, Slovakia}
\email{lubica.hola@mat.savba.sk, branislav.novotny@mat.savba.sk}
\keywords{convex minimal usco maps, complete metrizability, (upper) Vietoris topology, (strong) Choquet game}
\subjclass[2010]{54C60, 54C35, 54E50}
\begin{document}

\begin{abstract}
	Let $X$ be a Tychonoff space and $MC(X)$ be the space of convex minimal usco maps with values in $\R$, the space of real numbers. Such set-valued maps are important in the study of subdifferentials of convex functions. Using the strong Choquet game we prove complete metrizability of $MC(X)$ with the upper Vietoris topology. If $X$ is normal, elements of $MC(X)$ can be approximated in the Vietoris topology by continuous functions. We also study first countability, second countability and other properties of the upper Vietoris topology on $MC(X)$.
\end{abstract}
	\maketitle
\section{Introduction}
	Convex usco maps are interesting because they describe common features of maximal monotone operators, of the convex subdifferential and of the Clarke generalized gradient. Convex minimal usco maps are very important in functional analysis \cite{Borwein1997}, \cite{Phelps1993}, where differentiability of Lipschitz functions is deduced by their Clarke subdifferentials being convex minimal usco maps. Convex minimal usco maps also appear in the study of weak Asplund spaces \cite{Fabi1997}, \cite{Moors2006}, optimization \cite{Coban1989} and in the study of differentiability of Lipschitz functions \cite{Borwein1991}, \cite{Moors1995}.

	Also a classical problem of approximations of relations by continuous functions leads to the study of convex usco maps \cite{Cellin1970}, \cite{Hola2005}, \cite{Hola2007a}, \cite{Hola2007}. For this problem, let $X$ be a Hausdorff space, let $C(X)$ be the space of all continuous real-valued functions defined on $X$ and let $CL(X \times\R)$ be the hyperspace of all nonempty closed subsets of $X \times\R$, where $\R$ is the space of real numbers. It is known (see \cite{Beer1988},\cite{Hola1987},\cite{Hola1992}) that if $X$ is a locally connected, locally compact metric space without isolated points and $F \in  CL(X \times\R)$, then $F$ can be approximated by continuous functions in the Hausdorff metric if and only if $F$ is the graph of a convex usco map. The fundamental result needed to prove the above theorem is due to Cellina \cite{Cellin1970}.
	
\section{Preliminaries}
	Throughout this paper we will denote general topological spaces by capital letters $X,Y,Z$ and we will ask of them to be at least Hausdorff and non-empty. The symbol $CL(X)$ denotes the space of all closed nonempty subsets of $X$. On this space we will consider the \emph{Vietoris topology}, denoted by $\tau_V$, which is generated by the following subbase:
	\[\{V^+; V\text{ is an open subset of }X\}\cup\{V^-; V\text{ is an open subset of }X\},\]
	where
	\[V^+=\{A\in CL(X); A\subseteq V\}\text{ and }V^-=\{A\in CL(X); A\cap V\not=\emptyset\}.\]
	The first part, namely $\{V^+; V\text{ is an open subset of }X\}$ is the base of a topology called the \emph{upper Vietoris topology}, denoted by $\tau_V^+$ and it will be of the main interest for us as explained later. Analogically the other part generates the \emph{lower Vietoris topology} $\tau_V^-$. An usual notation for the base sets of the Vietoris topology is
	\[[W_1,...,W_n]=W_1^-\cap...\cap W_n^-\cap\left(\bigcup_{k=1}^nW_k\right)^+.\]
	
	We will not distinguish between set-valued \emph{maps} (multifunctions) and their graphs. We will say that a set-valued map $F$ is usc if it is upper semicontinuous at every $x\in X$ (see \cite{Engelking1977} for a definition). We will say that it is usco (resp. cusco) if it is usc and for every $x\in X$ the set $F(x)$ is a non-empty compact set (resp. a non-empty compact and convex set). We say that $F$ is minimal usco (resp. cusco) if it is usco (resp. cusco) and whenever $G\subseteq F$ is usco (resp. cusco) then $G=F$.
	
	Notice that in the introduction we have used term \emph{convex (minimal) usco} rather than \emph{(minimal) cusco}. Their meanings are the same. First one is used more often in the literature than second one, but throughout this paper we will stick to the second shorter term. 
	
	We will be interested in the following spaces:
	\begin{align*}
		L(X,Y)&:=\{F\subseteq X\times Y; F\text{ is cusco}\},\\
		L_0(X,Y)&:=\{F\subseteq X\times Y; F\text{ is cusco and for every isolated }x,\ F(x)\text{ is singleton}\},\\
		MC(X,Y)&:=\{F\subseteq X\times Y; F\text{ is minimal cusco}\},\\
		C(X,Y)&:=\{f:X\to Y; f\text{ is continuous}\}.
	\end{align*}
	Denote by $\R$ the space of real numbers with the usual topology and denote 
	\[L(X):=L(X,\R),\ L_0(X):=L_0(X,\R),\ MC(X):=MC(X,\R),\ C(X):=C(X,\R).\]
	Observe that
	\[CL(X\times Y)\supseteq L(X,Y)\supseteq L_0(X,Y)\supseteq MC(X,Y)\supseteq C(X,Y),\]
	so all of these spaces can inherit both the Vietoris and the upper Vietoris topology from $CL(X\times Y)$.
	\begin{remark}
		Note that when we will work with the space e.g. $L_0(X)$ then instead of writing $W^+\cap L_0(X)$ resp. $[W_1,W_2]\cap L_0(X)$ we will write only $W^+$ resp. $[W_1,W_2]$. The reader can easily figure out the complete expression from the context.
	\end{remark}
	Notice that if $X$ is regular then also $X\times\R$ is regular. Thus we have that $(CL(X\times\R),\tau_V)$ is Hausdorff and so are $L(X)$, $L_0(X)$, $MC(X)$ and $C(X)$ with $\tau_V$. The situation for $\tau_V^+$ is more complicated in general and we will address it later.
	
	We will be interested in completeness a countability properties of these spaces. For the cardinal invariants that are needed we refer to the book of Juhász \cite{Juhasz1980}. Denote by $\omega_0$  the infinite countable cardinal; i.e. the set $\{0,1,2,3,...\}$ and by $\omega_1$ the first uncountable cardinal.
	
	Since we will also work with normal and perfectly normal spaces, we will use the following results. For definitions of upper (resp. lower) semicontinuous functions we refer to \cite[Engelking]{Engelking1977}. We will denote them as u.s.c. (resp. l.s.c.).% and the respective sets of real-valued functions as $USC(X)$ ($LSC(X)$).
	
	\begin{proposition}[{\cite[1.7.15 (b)]{Engelking1977}}]\label{prop:norm}
		A $T_1$ space $X$ is normal iff for every $f,h:X\to R$ such that $f\le h$, $f$ is u.s.c. and $h$ is l.s.c. there is $g\in C(X)$ such that $f\le g\le h$.
	\end{proposition}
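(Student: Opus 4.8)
The plan is to prove both implications of this characterization (it is the Katětov--Tong insertion theorem), with essentially all of the work lying in the direction normal $\Rightarrow$ insertion.

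For the easy direction I would assume the insertion property and derive normality directly. Given disjoint closed sets $A,B$, set $f=\chi_A$ and $h=1-\chi_B$. Since $A$ is closed, $f$ is u.s.c.; since $X\setminus B$ is open, $h$ is l.s.c.; and $A\cap B=\emptyset$ forces $f\le h$ pointwise. Inserting $g\in C(X)$ with $f\le g\le h$ yields $g\ge 1$ on $A$ and $g\le 0$ on $B$, so $\{x:g(x)>1/2\}$ and $\{x:g(x)<1/2\}$ are disjoint open sets separating $A$ and $B$; hence $X$ is normal.

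For the converse I would first normalize: composing $f$ and $h$ with a fixed increasing homeomorphism $\R\to(0,1)$ preserves continuity, upper and lower semicontinuity, and the pointwise order, so it is enough to insert $g$ for bounded functions and then transport back by the inverse homeomorphism. The key structural fact is that for rationals $p<q$ the sets $\{x:h(x)\le p\}$ and $\{x:f(x)\ge q\}$ are closed (by lower semicontinuity of $h$ and upper semicontinuity of $f$) and disjoint, since $h(x)\le p<q\le f(x)$ would contradict $f\le h$. This disjoint pair of closed sets is precisely the input required by Urysohn's lemma, and it is the only point at which normality enters.

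Using this, the plan is to build a Urysohn scale of open sets $\{G_q:q\in\mathbb{Q}\}$ interpolating the two functions, with $\{x:h(x)\le p\}\subseteq G_q$ and $\overline{G_p}\subseteq G_q\cap\{x:f(x)<q\}$ for all rationals $p<q$, and then to set $g(x)=\inf\{q:x\in G_q\}$. The inclusions $\overline{G_p}\subseteq G_q$ make $g$ continuous by the standard scale argument; taking the union of the lower inclusions over $p<q$ gives $\{h<q\}\subseteq G_q$, whence $g\le h$, while intersecting the upper inclusions over $q>p$ gives $G_p\subseteq\{f\le p\}$, whence $f\le g$. The hard part will be producing the whole family $\{G_q\}$ coherently: each $G_q$ must simultaneously contain every $\{h\le p\}$ with $p<q$ and keep its closure inside every $\{f<q'\}$ with $q'>q$, all while remaining monotone under closure. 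I would carry this out by induction over an enumeration of $\mathbb{Q}$, at each stage inserting the new set between its two current neighbours by one application of Urysohn's lemma to the disjoint closed pair above, and I would bridge the gap between strict and non-strict sublevel sets --- using $\{h<q\}=\bigcup_{p<q}\{h\le p\}$ and $\{f\le q\}=\bigcap_{q'>q}\{f<q'\}$ --- through the density of $\mathbb{Q}$ when passing to the infimum that defines $g$.
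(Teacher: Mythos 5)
The paper itself contains no proof of this proposition --- it is quoted verbatim from Engelking (it is the Kat\v{e}tov--Tong insertion theorem) --- so your attempt can only be measured against the standard arguments. Your easy direction, the reduction to functions with values in $(0,1)$, and the derivation of continuity of $g$ and of $f\le g\le h$ from the postulated scale $\{G_q\}$ are all correct. The genuine gap sits exactly where you yourself flag ``the hard part'': the inductive step cannot be carried out by ``one application of Urysohn's lemma to the disjoint closed pair'' $\{h\le p\}$, $\{f\ge q\}$. Unwinding your conditions, the set $G_q$ built at a given stage must satisfy $\{h<q\}\cup\overline{G_{p^*}}\subseteq G_q$ and $\overline{G_q}\subseteq\{f\le q\}\cap G_{r^*}$, where $p^*,r^*$ are the neighbouring previously treated rationals. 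Here $\{h<q\}=\bigcup_{p<q}\{h\le p\}$ is open, not closed, and $\{f\le q\}=\bigcap_{q'>q}\{f<q'\}$ is in general neither open nor closed when $f$ is merely u.s.c.; so this is not a ``closed set inside an open set'' configuration, and normality cannot be applied to it directly. The alternative reading --- impose at each stage only the finitely many conditions indexed by previously enumerated rationals and recover the rest ``by density of $\mathbb{Q}$ when passing to the infimum'' --- fails on the $f$ side: the condition $\overline{G_p}\subseteq\{f<q'\}$ for a rational $q'>p$ enumerated \emph{after} $p$ is a condition on the already constructed set $G_p$ and cannot be imposed retroactively. Concretely, if at the stage when $G_p$ was built every previously enumerated rational above $p$ exceeded $p+1$, nothing prevents a point $x\in G_p$ from having $f(x)$ close to $p+1$, giving $g(x)\le p<f(x)$; the density of later rationals never repairs this, so $f\le g$ genuinely fails.

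The step can be repaired, but it needs an idea your sketch does not contain. Maintain the invariants $\{h<q\}\subseteq G_q$, $\overline{G_q}\subseteq\{f\le q\}$, and $\overline{G_p}\subseteq G_q$ for $p<q$, and at each stage set $G_q:=\{h<q\}\cup V$, where $V$ is obtained by one application of normality to the genuinely closed-inside-open pair $\overline{G_{p^*}}\subseteq\{f<q\}\cap G_{r^*}$ (this inclusion does follow from the invariants for $p^*$ and $r^*$). The closure condition on $G_q$ is then verified not by normality but by semicontinuity: lower semicontinuity of $h$ gives $\overline{\{h<q\}}\subseteq\{h\le q\}$, and $f\le h$ together with $q<r^*$ gives $\{h\le q\}\subseteq\{f\le q\}\cap\{h<r^*\}\subseteq\{f\le q\}\cap G_{r^*}$. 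So the semicontinuity of $h$ must enter a second time, to control closures, and not only through Urysohn pairs as your plan asserts. Alternatively you could abandon scales altogether and follow Tong's classical route: use finitely many Urysohn functions attached to the pairs $\{h\le k/n\}$, $\{f\ge (k+1)/n\}$ to insert a continuous $g_\varepsilon$ with $f-\varepsilon\le g_\varepsilon\le h+\varepsilon$, then iterate with errors $2^{-n}$ applied to $\max(f,g_{n-1}-2^{-n})$ and $\min(h,g_{n-1}+2^{-n})$ and pass to a uniform limit.
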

	\begin{proposition}[{\cite[1.7.15 (c)]{Engelking1977}}]\label{prop:perf}
		A $T_1$ space $X$ is perfectly normal iff for every l.s.c. (u.s.c.) function $f:X\to R$ there is a sequence $(f_n)$ such that $f_n\in C(X)$ and $f_n\nearrow f\ (f_n\searrow f)$.
	\end{proposition}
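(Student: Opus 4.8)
The two displayed statements are equivalent under the substitution $f\mapsto -f$, which interchanges lower and upper semicontinuity and turns $\nearrow$ into $\searrow$, so the plan is to prove only the l.s.c. version. For the forward implication I would exploit the two features packaged into perfect normality: $X$ is normal, so Urysohn functions exist, and every open set is an $F_\sigma$ (equivalently a cozero set). First I would reduce to the case $f(X)\subseteq(0,1)$ by composing $f$ with an increasing homeomorphism $\R\to(0,1)$; this preserves lower semicontinuity and preserves increasing pointwise convergence, and transporting the resulting approximants back to $\R$ is a routine (if slightly delicate) endpoint matter which I would not dwell on in the sketch.

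For such a bounded $f$, set $U_q=\{x\in X: f(x)>q\}$ for each rational $q\in(0,1)$. Each $U_q$ is open, hence by perfectness $U_q=\bigcup_m F_{q,m}$ with $F_{q,m}$ closed and increasing in $m$. Using normality I would pick continuous $g_{q,m}\colon X\to[0,1]$ equal to $1$ on $F_{q,m}$ and to $0$ off $U_q$, and put $h_{q,m}=q\,g_{q,m}$. Then $h_{q,m}\le f$ everywhere: it equals $0<f$ outside $U_q$ and is $\le q<f$ on $U_q$, while $h_{q,m}=q$ on $F_{q,m}$. Enumerating the countable family $\{h_{q,m}\}$ as $(\tilde h_j)$ and setting $f_n=\max_{j\le n}\tilde h_j$ gives continuous functions that are nondecreasing and satisfy $f_n\le f$. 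For the convergence, fix $x$; for every rational $q<f(x)$ we have $x\in U_q$, hence $x\in F_{q,m}$ for some $m$, so $\tilde h_j(x)=q$ for the corresponding index and $\sup_n f_n(x)\ge q$. Letting $q\nearrow f(x)$ yields $f_n\nearrow f$. I expect the $F_\sigma$-decomposition to be the crux: a single cozero function for $U_q$ would not force the approximants to attain the level $q$, and it is exactly the splitting into closed pieces together with Urysohn's lemma that pins the pointwise supremum to $f$ rather than to something smaller.

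For the converse I would assume the approximation property and recover both halves of perfect normality. To see that $X$ is \emph{perfect}, note that for any open $U$ the indicator $\mathbf 1_U$ is l.s.c.; choosing continuous $f_n\nearrow\mathbf 1_U$ gives $U=\{\mathbf 1_U>0\}=\bigcup_{n,k}\{f_n\ge 1/k\}$, a countable union of closed sets, so every open set is $F_\sigma$. For \emph{normality}, let $A,B$ be disjoint closed sets. Applying the property to the l.s.c. function $\mathbf 1_{X\setminus A}$ produces continuous $p_n\nearrow\mathbf 1_{X\setminus A}$; the open sets $U_n=\{p_n>1/2\}$ are increasing, cover $B$, and satisfy $\overline{U_n}\subseteq\{p_n\ge 1/2\}$, which is disjoint from $A$ (since $p_n\le 0$ on $A$). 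Symmetrically I would obtain increasing open $V_n$ covering $A$ with $\overline{V_n}\cap B=\emptyset$. The classical countable separation device, setting $U_n'=U_n\setminus\bigcup_{k\le n}\overline{V_k}$ and $V_n'=V_n\setminus\bigcup_{k\le n}\overline{U_k}$, then yields disjoint open sets $\bigcup_n U_n'\supseteq B$ and $\bigcup_n V_n'\supseteq A$, so $X$ is normal and therefore perfectly normal.
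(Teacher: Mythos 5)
The paper offers no proof of this proposition at all --- it is quoted from Engelking [1.7.15(c)] --- so there is no internal argument to compare against and your proposal must stand on its own. Your converse direction does stand: indicators of open sets are l.s.c., their approximants exhibit every open set as an $F_\sigma$, and the approximants of $\mathbf{1}_{X\setminus A}$, $\mathbf{1}_{X\setminus B}$ combined with the classical alternating device $U_n'=U_n\setminus\bigcup_{k\le n}\overline{V_k}$ do give normality. The core of your forward construction (decomposing $U_q=\{f>q\}$ into closed pieces $F_{q,m}$ and pinning Urysohn functions to them) is also correct, and for $f$ bounded below it already proves the claim: shift so that $f\ge 0$, and then each block $h_{q,m}=q\,g_{q,m}$ satisfies $h_{q,m}=0\le f$ off $U_q$, with no homeomorphism needed.

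The genuine gap is exactly the step you dismiss as a ``routine (if slightly delicate) endpoint matter.'' After composing with $h\colon\R\to(0,1)$, write $F=h\circ f$. Your approximant $f_n=\max_{j\le n}\tilde h_j$ vanishes identically on $\{x\in X: F(x)\le q\}$, where $q>0$ is the least rational occurring among the first $n$ blocks; when the original $f$ is unbounded below this set is nonempty for every $n$. Since $h^{-1}(t)\to-\infty$ as $t\to 0^{+}$, the pullback $h^{-1}\circ f_n$ is not a real-valued function, and naive patches fail: for instance $h^{-1}\circ\max(f_n,h(-n))$ equals $-n$ wherever $f_n$ is still $0$, so monotonicity is destroyed. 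What the pullback actually needs is a single continuous $v$ with $0<v\le F$, after which $h^{-1}\circ\max(f_n,v)\nearrow f$ finishes the proof. But producing such a $v$ --- inserting a continuous function strictly between the u.s.c. function $0$ and the l.s.c. function $F$ --- is not routine: for merely normal $X$ it can fail (by Dowker's theorem this strict insertion characterizes countably paracompact normal spaces, and Dowker spaces witness its failure). So perfect normality must be used a second time, beyond the $F_\sigma$-decomposition you already invoked; this hidden step is the real crux of the unbounded case.

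The gap is fixable inside your framework, which is why it is a gap and not a dead end. In a perfectly normal space every closed set is a zero-set: if $A=\bigcap_n W_n$ with $W_n$ open, take Urysohn functions $u_n$ vanishing on $A$ and equal to $1$ off $W_n$; then $\sum_n 2^{-n}u_n$ vanishes exactly on $A$. Apply this to the closed sets $A_m=\{x\in X: F(x)\le 1/m\}$, $m\ge 1$, which decrease and have empty intersection, to get continuous $\phi_m\colon X\to[0,1]$ with $\phi_m^{-1}(0)=A_m$, and put $v=\sum_m 2^{-m}\phi_m$. Then $v$ is continuous and strictly positive, and if $m_0\ge 1$ is the largest index with $x\in A_{m_0}$ (it exists because $\bigcap_m A_m=\emptyset$, and $m_0\ge 1$ because $F<1$), then $\phi_k(x)=0$ for all $k\le m_0$, so $v(x)\le 2^{-m_0}\le 1/(m_0+1)<F(x)$. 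Hence $0<v\le F$ as required. Alternatively you may quote that perfectly normal spaces are countably paracompact and invoke Dowker's insertion theorem. Either way, this extra input must appear explicitly; as written, your forward direction is only proved for l.s.c. functions bounded from below.
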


	\begin{definition}[\cite{Neubrunn1988}]
		Let $X,Y$ be topological spaces. A function $f:X\to Y$ is quasicontinuous at a point $x\in X$ if for every open neighborhood $V$ of $f(x)$ and every open neighborhood $U$ of $x$ there is a non-empty open set $G\subseteq U$ (not necessarily containing $x$) such that for every $x'\in G$ it holds $f(x')\in V$.
	\end{definition}
	Quasicontinuity was first defined by Kempisty in \cite{Kempisty1932} for real functions of real variables.
	
	We will also need the following generalization of continuity for a function with a dense domain, i.e. densely defined function.
	\begin{definition}[\cite{Lechicki1990}]
		Let $X,Y$ be topological spaces and let $A$ be a dense subset of $X$. A function $f:A\to Y$ is said to be \emph{subcontinuous at $x\in X$} iff for every net $(x_\lambda)\subseteq A$ s.t. $x_\lambda\to x$, the net $(f(x_\lambda))$ has a cluster point. If $f$ is subcontinuous at every $x\in X$ it is \emph{subcontinuous}.
	\end{definition}
	Subcontinuity for functions with a full domain i.e. $A=X$ was defined by Fuller in \cite{Fuller1968}.
	\begin{theorem}[{\cite[2.4, 2.6]{Hola2014}}]\label{thm:characterization}
		Let $X$ be a topological space and $F\subseteq X\times\R$ be a set-valued map. The following are equivalent:
		\begin{enumerate}
			\item $F$ is minimal cusco;
			\item $F$ has a quasicontinuous subcontinuous selection $f:X\to\R$ s.t. $\co[\cl{f}(x)]=F(x)$ for every $x\in X$;
			\item there is a dense subset $A$ of $X$ and a quasicontinuous (on $A$) subcontinuous (on $X$) function $f:A\to\R$ s.t. $\co[\cl{f}(x)]=F(x)$ for every $x\in X$.
		\end{enumerate}
	\end{theorem}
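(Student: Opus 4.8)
The plan is to establish the cycle $(1)\Rightarrow(2)\Rightarrow(3)\Rightarrow(1)$. The implication $(2)\Rightarrow(3)$ needs no work: taking $A=X$ turns the selection furnished by $(2)$ into a densely defined function of the form required by $(3)$, and the identity $\co[\cl{f}(x)]=F(x)$ is preserved verbatim.

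For $(3)\Rightarrow(1)$ I would first verify that $F(x):=\co[\cl{f}(x)]$ is cusco. Subcontinuity at $x$ forces every net $x_\lambda\to x$ to make $(f(x_\lambda))$ cluster, so $\cl{f}(x)\ne\emptyset$; the same property prevents $f$ from being unbounded on any neighborhood of $x$, since an unbounded net would have no cluster point. Hence $f$ is locally bounded and each slice $\cl{f}(x)$, being a closed bounded subset of $\R$, is nonempty and compact, so its convex hull is a compact interval. Local boundedness together with closedness of the graph $\cl{f}$ then gives a locally bounded compact-valued map with closed graph, which into $\R$ is automatically usc; thus $F$ is cusco.

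The substance of $(3)\Rightarrow(1)$ is minimality. Let $G\subseteq F$ be cusco; I claim $f$ is a selection of $G$, from which $\cl{f}\subseteq G$ (the graph of $G$ is closed) and then $F=\co[\cl{f}]\subseteq G\subseteq F$, forcing $G=F$. Suppose instead $f(x_0)\notin G(x_0)$ for some $x_0$. As $G(x_0)$ is a compact interval, choose an open interval $V\ni f(x_0)$ whose closure misses $G(x_0)$; upper semicontinuity of $G$ yields a neighborhood $U$ of $x_0$ with $G(u)\cap\cl V=\emptyset$ for all $u\in U$. Now quasicontinuity of $f$ at $x_0$ provides a nonempty open $W\subseteq U$ with $f(W)\subseteq V$. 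Since $W$ is open, every net converging to a point $w\in W$ is eventually in $W$, so $\cl{f}(w)\subseteq\cl V$ and hence $G(w)\subseteq\co[\cl{f}(w)]\subseteq\cl V$; this contradicts $G(w)\cap\cl V=\emptyset$ together with $G(w)\ne\emptyset$. This trapping step, where openness of $W$ confines the cluster values of $f$ to $\cl V$, is the technical heart of the direction and the precise point at which quasicontinuity is indispensable.

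Finally, for $(1)\Rightarrow(2)$ I would construct the selection. Subcontinuity is free: a cusco map into $\R$ is locally bounded, hence so is every selection, and local boundedness yields subcontinuity. The real difficulty, and the main obstacle of the whole theorem, is producing a selection that is simultaneously quasicontinuous and satisfies $\co[\cl{f}(x)]=F(x)$. My strategy is to pass to an associated minimal usco map $m\subseteq F$ whose convexification recovers $F$, that is $\co\,m(x)=F(x)$, and then to extract a quasicontinuous selection $f$ of $m$; taking closures and convex hulls gives $\co[\cl{f}(x)]=\co\,m(x)=F(x)$. The existence of a quasicontinuous (subcontinuous) selection of a minimal usco map is the nontrivial ingredient: it is obtained by a pointwise construction that exploits minimality to guarantee that the chosen values vary quasicontinuously, and this is where I expect to spend the most effort.
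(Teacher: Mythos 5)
First, a point of reference: the paper does not prove this statement at all --- it is quoted verbatim from \cite[2.4, 2.6]{Hola2014} and used as a black box. So there is no in-paper argument to compare you against; your proposal has to be judged as a free-standing proof, essentially a reconstruction of Hol\'a--Hol\'y's result. Under that standard, your cycle $(2)\Rightarrow(3)\Rightarrow(1)$ is essentially sound. Two details need repair but are fixable: (i) in $(3)\Rightarrow(1)$ your trapping argument is written as if $A=X$; when $A$ is merely dense, quasicontinuity of $f$ on $A$ gives a set $W$ that is only \emph{relatively} open in $A$, so you must pass to an open $W'\subseteq X$ with $W=W'\cap A\subseteq U$ and run the cluster-point argument at points of $W'$, using density of $A$ to produce the nets --- the conclusion $\cl{f}(w)\subseteq\cl{V}$ for $w\in W'$ then goes through; (ii) your ``closed graph plus locally bounded implies usc'' step is applied to $\cl{f}$, but you still owe the reader that the convexification $\co[\cl{f}]$ is usc; for real-valued maps this follows because $\sup\cl{f}(\cdot)$ is upper semicontinuous and $\inf\cl{f}(\cdot)$ is lower semicontinuous, and an interval-valued map bounded by such functions is usco.

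The genuine gap is $(1)\Rightarrow(2)$, and it is exactly where you predicted you would ``spend the most effort'': you never spend it. Your reduction splits into (a) every minimal cusco $F$ into $\R$ equals $\co m$ for some minimal usco $m\subseteq F$, and (b) every minimal usco map admits a quasicontinuous subcontinuous selection whose graph closure is $m$. Fact (a) is actually cheap and you could have closed it: by Zorn's lemma $F$ contains a minimal usco $m$; the map $x\mapsto\co[m(x)]$ is cusco and contained in $F$ since $F$ has convex values, so minimality of $F$ forces $\co m=F$. Fact (b), however, is not a routine ``pointwise construction'' --- it is the substantive theorem (it is Hol\'a--Hol\'y's characterization of minimal usco maps), and your proposal contains no argument for it. The standard proof takes an arbitrary selection $f$ of $m$ and shows quasicontinuity \emph{from} minimality by contradiction: if quasicontinuity fails at $x_0$ with witnesses $V\ni f(x_0)$ and $U\ni x_0$, then $D=\{x\in U;\ f(x)\notin V\}$ is dense in $U$, and the map equal to $\cl{(f\restriction_D)}$ over $U$ and to $m$ outside $U$ (suitably glued along the boundary) is an usco strictly inside $m$, contradicting minimality; one also needs $\cl{f}=m$, again by minimality, to get $\co[\cl{f}]=\co m=F$. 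Without this argument (or an honest citation of it as an external lemma), your proof establishes only the easy directions and defers the entire content of $(1)\Rightarrow(2)$ to an unproved claim.
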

	For a set-valued map $F\subseteq X\times Y$ denote \[S(F)=\{x\in X; F(x)\text{ is a singleton}\},\] and for a function $f:X\to Y$ denote \[C(f)=\{x\in X; f\text{ is continuous at }x\}.\]
	\begin{theorem}\label{thm:baire}
		Let $X$ be a Baire space and $F\in MC(X)$. Then $S(F)$ is a dense $G_\delta$-subset of $X$.
	\end{theorem}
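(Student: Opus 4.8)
The plan is to reduce the statement about the set-valued map $F$ to a statement about a single real-valued selection, and then invoke classical facts about continuity points of quasicontinuous functions on Baire spaces. By Theorem \ref{thm:characterization} there is a quasicontinuous subcontinuous selection $f:X\to\R$ with $\co[\cl{f}(x)]=F(x)$ for every $x\in X$, where $\cl{f}(x)=\bigcap_{U\ni x}\cl{f(U)}$ is the fibre of the graph closure over $x$ (the intersection running over open neighbourhoods $U$ of $x$). For a nonempty set $A\subseteq\R$ the convex hull $\co A$ is the interval $[\inf A,\sup A]$, which is a singleton precisely when $A$ is. Hence $F(x)$ is a singleton if and only if $\cl{f}(x)$ is, so my first goal is to prove
\[S(F)=\{x\in X;\ \cl{f}(x)\text{ is a singleton}\}=C(f).\]

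Second I would pass to the oscillation function $\omega_f(x)=\inf_{U\ni x}\operatorname{diam} f(U)$. Subcontinuity of $f$ is what guarantees local boundedness: if $f$ were unbounded on every neighbourhood of some $x$, one could select a net $x_\lambda\to x$ with $|f(x_\lambda)|\to\infty$, contradicting the existence of a cluster point of $(f(x_\lambda))$. Thus near each point the sets $\cl{f(U)}$ are nonempty compact and nested, and $\omega_f$ is finite. The key local lemma is then that $\cl{f}(x)$ is a singleton if and only if $\omega_f(x)=0$: the forward direction follows because a decreasing family of nonempty compacts with singleton intersection $\{c\}$ must have diameters tending to $0$ (a finite-intersection-property argument applied to the compacts $\cl{f(U)}\setminus B(c,\varepsilon)$), while the converse is the usual nested collapse. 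Combining this with the first paragraph yields $S(F)=\{x;\ \omega_f(x)=0\}=C(f)$. Since $\{x;\ \omega_f(x)<\varepsilon\}$ is open for every $\varepsilon>0$ (if $\operatorname{diam} f(U)<\varepsilon$ then every point of $U$ has oscillation $<\varepsilon$), we get $S(F)=\bigcap_{n}\{x;\ \omega_f(x)<1/n\}$, a $G_\delta$ set.

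For density I would show that each open set $O_\varepsilon=\{x;\ \omega_f(x)<\varepsilon\}$ is dense, which uses only quasicontinuity. Given a nonempty open $U$ and $\varepsilon>0$, pick $x_0\in U$ and apply quasicontinuity at $x_0$ to the neighbourhood $B(f(x_0),\varepsilon/4)$ of $f(x_0)$ and the neighbourhood $U$ of $x_0$: this produces a nonempty open $G\subseteq U$ with $f(G)\subseteq B(f(x_0),\varepsilon/4)$, so $\operatorname{diam} f(G)\le\varepsilon/2$ and every point of $G$ lies in $O_\varepsilon$. Hence $O_\varepsilon\cap U\neq\emptyset$, so $O_\varepsilon$ is dense open. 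Finally, because $X$ is a Baire space, the countable intersection $S(F)=\bigcap_n O_{1/n}$ of dense open sets is dense, completing the proof.

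I expect the main obstacle to be the careful verification of the equivalence $\cl{f}(x)\text{ singleton}\iff\omega_f(x)=0$, since this is where subcontinuity is genuinely needed and where one must argue with the directed (rather than sequential) family of neighbourhoods through compactness; the density step, by contrast, is a short direct consequence of quasicontinuity, and Baireness enters only at the very end to pass from ``each $O_{1/n}$ is dense'' to ``the intersection is dense''.
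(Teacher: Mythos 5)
Your proposal is correct and follows the same core reduction as the paper: obtain a quasicontinuous subcontinuous selection $f$ via Theorem \ref{thm:characterization}, identify $S(F)$ with the set $C(f)$ of continuity points of $f$, and conclude by Baire category. The difference is one of self-containedness: the paper simply cites the classical result (from Neubrunn's survey on quasi-continuity) that a quasicontinuous function from a Baire space into a metric space is continuous on a dense $G_\delta$ set, whereas you prove that fact from scratch through the oscillation function $\omega_f$, showing the sets $\{x;\ \omega_f(x)<1/n\}$ are open and dense. A second, smaller divergence is in the identification $S(F)=C(f)$ itself: for the inclusion $S(F)\subseteq C(f)$ the paper argues directly from upper semicontinuity of $F$ at a point where $F(x)$ is a singleton, while you instead route through the equivalence ``$\cl{f}(x)$ is a singleton iff $\omega_f(x)=0$,'' using subcontinuity to get local boundedness and then a finite-intersection-property argument on the compact sets $\cl{f(U)}\setminus B(c,\varepsilon)$. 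Both arguments are sound; the paper's is shorter because it exploits the upper semicontinuity of $F$ that is already available, while yours has the merit of making the whole proof self-contained and of isolating exactly where subcontinuity (local boundedness/compactness) versus quasicontinuity (density of the small-oscillation sets) enter.
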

	\begin{proof}
		By the virtue of Theorem \ref{thm:characterization} we have a quasicontinuous subcontinuous function $f:X\to\R$ s.t. $\co[\cl{f}(x)]=F(x)$. It is easy to see that for every $x\in S(F)$ (since $F$ is usc at $x$) $f$ has to be continuous at $x$; i.e. $S(F)\subseteq C(f)$.
		
		Now suppose that $x\in C(f)$. Then one can verify that $\cl{f}(x)=\{f(x)\}$; thus $x\in S(F)$. 
		
		We have proven that $S(F)=C(f)$ and now since $X$ is a Baire space, $\R$ is a metric space and $f$ is quasicontinuous, we have that $C(f)$ is a dense $G_\delta$-subset of $X$, see \cite{Neubrunn1988}.
	\end{proof}

\section{Countability properties of $C(X)$}
	
	In this section we mostly collect some useful facts about the space $(C(X),\tau_V)$ which are needed for the study of the space $MC(X)$.  Denote by $\tau_U$ the \emph{topology of uniform convergence} on $C(X)$; i.e. the topology generated by the supremum metric \[d(f,g)=\sup_{x\in X}|f(x)-g(x)|.\] Note that on $C(X)$ we have $\tau_U\subseteq\tau_V$.
	
	The following theorem is a well known fact, see eg. \cite{Hola2015}.
	\begin{theorem}\label{thm:VplusV}
		Topologies $\tau_V^+$ and $\tau_V$ coincide on $C(X)$.
	\end{theorem}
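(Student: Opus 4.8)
My plan is to prove the nontrivial inclusion only, since $\tau_V^+\subseteq\tau_V$ holds on all of $CL(X\times\R)$ for the trivial reason that the subbase generating $\tau_V^+$ is a subfamily of the subbase generating $\tau_V$. For the reverse inclusion $\tau_V\subseteq\tau_V^+$ on $C(X)$ it suffices, since both are topologies, to show that every subbasic $\tau_V$-open set is $\tau_V^+$-open on $C(X)$. The sets of the form $V^+$ are $\tau_V^+$-open by definition, so the entire problem reduces to proving that $V^-\cap C(X)$ is $\tau_V^+$-open for each open $V\subseteq X\times\R$.

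To do this I fix $f\in V^-\cap C(X)$ and produce a $\tau_V^+$-neighborhood of $f$ lying inside $V^-$. Identifying $f$ with its graph, there is a point $x_0\in X$ with $(x_0,f(x_0))\in V$, and hence a basic box $U\times(a,b)\subseteq V$ with $x_0\in U$ and $a<f(x_0)<b$. The crux of the argument is to introduce the open set
$$ O=(X\times\R)\setminus\bigl(\{x_0\}\times(\R\setminus(a,b))\bigr), $$
which is open because $X$ is Hausdorff, so $\{x_0\}$ is closed and the removed set is closed in $X\times\R$. I then check two facts. First, $f\in O^+$: the graph of $f$ could meet $\{x_0\}\times(\R\setminus(a,b))$ only over $x_0$, where $f(x_0)\in(a,b)$, so it avoids that set and lies in $O$. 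Second, $O^+\cap C(X)\subseteq V^-$: if $g\in C(X)$ has graph inside $O$, then $(x_0,g(x_0))\in O$ forces $g(x_0)\in(a,b)$, and together with $x_0\in U$ this gives $(x_0,g(x_0))\in U\times(a,b)\subseteq V$, so $g$ meets $V$. Thus $O^+$ witnesses that $V^-\cap C(X)$ is $\tau_V^+$-open, completing the argument.

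I do not expect a serious obstacle here; the proof is short, and its single genuine idea is the construction of $O$. This is precisely the only step that uses membership in $C(X)$ rather than in the full hyperspace: deleting a vertical ray over the one point $x_0$ pins the value $g(x_0)$ into $(a,b)$ exactly because a function is single-valued at $x_0$. For an arbitrary closed set this pinning would fail, which is the conceptual reason why $\tau_V^+$ and $\tau_V$ differ on $CL(X\times\R)$ in general while coinciding on $C(X)$.
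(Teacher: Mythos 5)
Your proof is correct. Note that the paper itself does not prove this theorem at all --- it is quoted as a well-known fact with a citation to Hol\'a--Zsilinszky --- so your argument supplies a self-contained elementary proof rather than an alternative to one in the text. The reduction to subbasic sets $V^-$, the choice of a box $U\times(a,b)\subseteq V$ around $(x_0,f(x_0))$, and the device of deleting the closed vertical set $\{x_0\}\times(\R\setminus(a,b))$ are all sound, and both verifications ($f\in O^+$ and $O^+\cap C(X)\subseteq V^-$) check out. One refinement to your closing diagnosis: the pinning in the second verification uses only that $g$ has a nonempty value over $x_0$, not that it is single-valued there; where single-valuedness is genuinely needed is in the \emph{first} verification, namely that the center $f$ itself lies in $O^+$. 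This distinction is visible elsewhere in the paper: on $MC(X)$ every map has nonempty (compact convex) values at every point, yet the paper gives an example showing $\tau_V^-\not\subseteq\tau_V^+$ on $MC(X)$ (step maps $f_n$ converging to $F$ in $\tau_V^+$ but not in $\tau_V^-$); the obstruction there is exactly that a multivalued center $F$ with $F(x_0)\not\subseteq(a,b)$ fails to belong to $O^+$, while your pinning step would still go through for any member of $O^+$ defined at $x_0$. So your construction isolates correctly why the coincidence of the two topologies is special to single-valued maps, even if the attribution of where single-valuedness enters should be shifted from the second check to the first.
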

	
	\begin{theorem}[{\cite[Theorem 5.1]{Hola2015}}]\label{thm:CXfirst}
		Let $X$ be a Tychonoff space. Then the following are equivalent:
		\begin{enumerate}
			\item $(C(X),\tau_V)$ is metrizable;
			\item $(C(X),\tau_V)$ is first countable; \label{eq:char}
			\item $(C(X),\tau_V)$ has a countable $\pi-$character; \label{eq:pichar}
			\item $(C(X),\tau_V)$ is a Fréchet space;
			\item $(C(X),\tau_V)$ is sequential;
			\item $(C(X),\tau_V)$ has a countable tightness;
			\item $X$ is countably compact; \label{eq:X}
			\item $\tau_V=\tau_U$. \label{eq:eq}
		\end{enumerate}
	\end{theorem}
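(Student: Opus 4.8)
The plan is to organize the eight conditions into two \emph{cheap} chains descending from metrizability and to close the loop through the uniform topology. First I would record the purely topological implications that hold for any space: (1)$\Rightarrow$(2)$\Rightarrow$(3) and (1)$\Rightarrow$(4)$\Rightarrow$(5)$\Rightarrow$(6), since a metric gives a countable local base (hence a countable local $\pi$-base) and makes the space Fréchet, every Fréchet space is sequential, and every sequential space has countable tightness. These are standard facts about cardinal functions and require no special feature of $C(X)$. It therefore suffices to prove (8)$\Rightarrow$(1), (7)$\Rightarrow$(8), and finally that the two weakest survivors of the chains, namely (3) and (6), each force (7). The implication (8)$\Rightarrow$(1) is immediate: the topology $\tau_U$ of uniform convergence is induced by the supremum metric, so it is metrizable, and if $\tau_V=\tau_U$ then $(C(X),\tau_V)$ is metrizable as well.

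For (7)$\Rightarrow$(8) I would argue $\tau_V^+\subseteq\tau_U$, which by Theorem \ref{thm:VplusV} (so that $\tau_V=\tau_V^+$ on $C(X)$) together with the always valid inclusion $\tau_U\subseteq\tau_V$ yields $\tau_V=\tau_U$. Fix a subbasic set $W^+$ and a point $f\in W^+$, so that the graph of $f$ is contained in the open set $W$. Define $\phi(x)=\sup\{\varepsilon>0:\{x\}\times[f(x)-\varepsilon,f(x)+\varepsilon]\subseteq W\}\in(0,\infty]$ and put $\psi=\min(\phi,1)$. Using the tube lemma applied to the compact vertical segment over $x$ together with the continuity of $f$, one checks that $\psi$ is lower semicontinuous and everywhere positive. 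Now countable compactness enters: the increasing open cover $\{\{\psi>1/k\}:k\in\omega\}$ of $X$ admits a finite, hence (by nesting) a single, subcover, so $\psi\ge 1/k_0$ for some $k_0$. Consequently every $g$ with $d(f,g)<1/k_0$ satisfies $(x,g(x))\in W$ for all $x$, i.e. the $\tau_U$-ball of radius $1/k_0$ around $f$ lies in $W^+$, so $W^+$ is $\tau_U$-open.

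The main obstacle is the remaining two implications, which I would prove by contraposition: assuming $X$ is \emph{not} countably compact, I must produce a point of $(C(X),\tau_V)$ with uncountable $\pi$-character (defeating (3)) and uncountable tightness (defeating (6)). Failure of countable compactness yields a countably infinite closed discrete set $\{x_n:n\in\omega\}$, and here complete regularity is essential: it lets me manufacture, near each $x_n$, continuous bump functions and positive lower semicontinuous profiles $h$, so that the pinching neighborhoods $W=\{(x,t):|t|<h(x)\}$ of the zero function $\mathbf 0$ realize any prescribed sequence of heights $h(x_n)\searrow 0$. Against any countable family of candidate $\pi$-base members $W_k^+$ I would diagonalize: assign to each $k$ a fresh index $n_k$, choose $h(x_{n_k})$ below the vertical room that $W_k$ leaves over $x_{n_k}$, and exhibit a bump lying in $W_k^+$ but escaping the assembled $W^+$, so that $W_k^+\not\subseteq W^+$; hence no countable family is a local $\pi$-base at $\mathbf 0$.

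An analogous diagonal argument along $\{x_n\}$ should produce a set $B\subseteq C(X)$ with $\mathbf 0\in\overline{B}$ while $\mathbf 0\notin\overline{C}$ for every countable $C\subseteq B$, witnessing uncountable tightness. I expect the delicate points to be twofold: the simultaneous control of $W$ at all slots while keeping it an open neighborhood of the graph of $\mathbf 0$, and the choice of an indexing set for $B$ rich enough that every pinching neighborhood is met by $B$ yet every countable subfamily can be pinched away. This combinatorial core, rather than the topology of the hyperspace, is where the real work lies; the rest of the argument is bookkeeping among the cheap implications and the one clean analytic step (7)$\Rightarrow$(8).
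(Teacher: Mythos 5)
Your overall architecture is sound and most of it is correct: the two cheap chains (1)$\Rightarrow$(2)$\Rightarrow$(3) and (1)$\Rightarrow$(4)$\Rightarrow$(5)$\Rightarrow$(6) are standard, (8)$\Rightarrow$(1) is immediate, and your tube-lemma argument for (7)$\Rightarrow$(8) is complete and correct (it is one half of Hansard's theorem, which the paper simply cites, just as it cites \cite[Theorem 5.1]{Hola2015} for the bulk of the equivalences and the group-theoretic fact $\chi=\pi\chi$ for item (3); so your route is genuinely more self-contained). Even the $\pi$-character diagonalization for $\neg$(7)$\Rightarrow\neg$(3), though only sketched, is completable in a routine way: by Theorem \ref{thm:VplusV} you may take the candidate $\pi$-base members of the form $W_k^+$, pick $f_k\in W_k^+\cap C(X)$, use a bump (complete regularity) to perturb it inside $W_k^+$ so that it is nonzero at $x_k$, and pinch below that value; the pinching set $\bigl((X\setminus\{x_n;n\in\omega_0\})\times\R\bigr)\cup\bigcup_n U_n\times(-\varepsilon_n,\varepsilon_n)$, with $U_n\cap\{x_m;m\in\omega_0\}=\{x_n\}$, is open precisely because $\{x_n\}$ is closed and discrete.

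The genuine gap is $\neg$(7)$\Rightarrow\neg$(6), which you only assert "should" follow from an analogous diagonalization; without it, (4), (5), (6) are mere consequences of (1) and the equivalence collapses. The analogy does not transfer because the quantifiers are reversed: for $\pi$-character you are \emph{given} a countable family and then build one pinching neighborhood against it, whereas for tightness you must exhibit a single (necessarily uncountable) set $B$ \emph{in advance} such that (i) every $\tau_V$-neighborhood of $\mathbf{0}$ meets $B$, and (ii) every countable $C\subseteq B$ avoids some neighborhood of $\mathbf{0}$. Requirement (i) is the hard one, since neighborhoods of $\mathbf{0}$ correspond to arbitrary positive lower semicontinuous "room" functions $h$, which pinch not only at the $x_n$ but everywhere; your members of $B$ must get under every such $h$ while remaining detectable for (ii). A choice that works is $B=\{f\in C(X);\ f(x_n)\neq 0\text{ for infinitely many }n\}$, but verifying (i) requires separating infinitely many $x_n$ by \emph{pairwise disjoint} open sets (true for countable closed discrete sets in regular spaces, by an inductive argument that itself needs proof) and then checking continuity of an infinite sum of bumps $\sum_i\delta_i\psi_i<h$ with $\delta_i\to 0$; verifying (ii) requires an injective selection of points $x_{n_k}$ with $f_k(x_{n_k})\neq 0$ and the observation that the corresponding graph points form a closed set missing $X\times\{0\}$. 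None of this is in your proposal, and it is exactly the content that the paper disposes of by citing \cite[Theorem 5.1]{Hola2015}.
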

	\begin{proof}
		In addition to the \cite[Theorem 5.1]{Hola2015} we have \eqref{eq:pichar} and \eqref{eq:eq}. The equality of \eqref{eq:char} and \eqref{eq:pichar} follows from the fact that character and $\pi-$character coincide for topological groups and $(C(X),\tau_V)$ is a topological group with respect to the addition. The equality of \eqref{eq:X} and \eqref{eq:eq} is proven in \cite{Hansard1970}.
	\end{proof}
	The following theorem is a generalization of \cite[Proposition 1.2]{Hola2013}.
	\begin{theorem}\label{thm:CXsecond}
	Let $X$ be a Tychonoff space. Then the following are equivalent:
		\begin{enumerate}
			\item $(C(X),\tau_V)$ is second countable; \label{eq:weight}
			\item $(C(X),\tau_V)$ is separable;
			\item $(C(X),\tau_V)$ satisfies countable chain condition; \label{eq:cell}
			\item $(C(X),\tau_V)$ is Lindelöff;
			\item $(C(X),\tau_V)$ has a countable $\pi-$base;
			\item $(C(X),\tau_V)$ has a countable network;
			\item $(C(X),\tau_V)$ has a countable spread;
			\item $(C(X),\tau_V)$ has a countable extent; \label{eq:extent}
			\item $X$ is compact and metrizable. \label{eq:XX}
		\end{enumerate}
	\end{theorem}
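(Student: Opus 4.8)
The plan is to prove the nine conditions equivalent by the scheme $\eqref{eq:XX}\Rightarrow\eqref{eq:weight}$, then $\eqref{eq:weight}\Rightarrow$ each of the remaining conditions by the standard monotonicity of cardinal invariants, and finally each of the remaining conditions $\Rightarrow\eqref{eq:XX}$. The unifying device for the last and hardest part will be to pass from $\tau_V$ to the coarser uniform topology $\tau_U$ (recall $\tau_U\subseteq\tau_V$ on $C(X)$), on which $C(X)$ has a transparent structure. First I would treat $\eqref{eq:XX}\Rightarrow\eqref{eq:weight}$: if $X$ is compact metrizable then $X$ is countably compact, so by Theorem \ref{thm:CXfirst} we have $\tau_V=\tau_U$, whence $(C(X),\tau_V)$ is $(C(\beta X),\|\cdot\|_\infty)$, a separable Banach space because $C(K)$ is separable for every compact metrizable $K$; a separable metric space is second countable. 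Conversely $\eqref{eq:weight}$ forces all of the remaining conditions at once, since countable weight dominates density, cellularity, the Lindelöf number, spread, extent, network weight and $\pi$-weight.

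For the converse directions the key observation is that $(C(X),\tau_U)$ is a topological sum of metric spaces: two functions lie at finite supremum distance iff their difference is bounded, and each finite-distance class is a clopen coset of $C^*(X)$ carrying a genuine metric. On such a topological sum, countable values of the invariants $d$, $c$, $L$, $s$ and $e$ are all equivalent to separability, because each such hypothesis forces both countably many summands and the separability of each (metrizable) summand. Moreover each of $d,c,L,s,e$ does not increase when one passes to the coarser topology $\tau_U$; hence if $(C(X),\tau_V)$ is separable, satisfies the countable chain condition, is Lindel\"of, or has countable spread or countable extent, then $(C(X),\tau_U)$ is separable.

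The two conditions of countable $\pi$-base and countable network I would dispatch separately and more cheaply: the universal inequalities $d\le\pi w$ and $d\le nw$ show at once that each implies separability of $(C(X),\tau_V)$, which then descends to $\tau_U$ as above. Thus in every case the hypothesis yields separability of $(C(X),\tau_U)$, and it remains only to prove that this forces $X$ to be compact and metrizable.

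This last step I regard as the main obstacle. Here I would first argue that $X$ must be pseudocompact: otherwise a countably infinite locally finite family of nonempty open sets, together with Urysohn functions subordinate to it (using that $X$ is Tychonoff), produces a family $\{f_S\}_{S\subseteq\omega}$ whose members are pairwise at supremum distance at least $1$, contradicting separability. Pseudocompactness gives $C(X)=C^*(X)$ and an isometry $(C(X),\tau_U)\cong(C(\beta X),\|\cdot\|_\infty)$, so $C(\beta X)$ is separable; by the classical characterization this forces the compact space $\beta X$ to be metrizable, hence second countable. Then $X$, being metrizable and pseudocompact, is compact metrizable, which is $\eqref{eq:XX}$. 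Combining the three parts yields the equivalence of all nine conditions.
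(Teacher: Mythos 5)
Your proposal is correct in substance, but it takes a genuinely different and much more self-contained route than the paper. The paper's own proof is very short: it observes that every invariant on the list is squeezed between cellularity-or-extent below and weight above, so only three implications need proof, namely \eqref{eq:XX}$\Rightarrow$\eqref{eq:weight}, \eqref{eq:cell}$\Rightarrow$\eqref{eq:XX} and \eqref{eq:extent}$\Rightarrow$\eqref{eq:XX}; the first two are simply cited from \cite[Proposition 1.2]{Hola2013}, and the third is exactly your device --- descend from $\tau_V$ to the coarser $\tau_U$ and invoke (without proof) the classical fact that countable extent of the metrizable space $(C(X),\tau_U)$ forces $X$ to be compact and metrizable. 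What you do differently is: (i) you prove \eqref{eq:XX}$\Rightarrow$\eqref{eq:weight} from scratch via Theorem \ref{thm:CXfirst} ($\tau_V=\tau_U$ under countable compactness) and separability of $C(K)$ for compact metrizable $K$; (ii) you run the $\tau_U$-descent uniformly for all of $d,c,L,s,e$, using that these invariants can only decrease under coarsening and that on $(C(X),\tau_U)$ --- viewed as a topological sum of clopen cosets of $C^*(X)$ --- countability of any of them is equivalent to separability; and (iii) you reprove the classical endpoint (sup-metric separability implies $X$ compact metrizable) via pseudocompactness, $C(X)=C^*(X)$, and metrizability of $\beta X$. In effect you have reproved the results the paper outsources to \cite{Hola2013} and to classical $C(X)$ theory; your version buys a self-contained argument at the cost of length, while the paper's reduction scheme buys brevity at the cost of external dependencies. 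Both proofs hinge on the same key idea for the hard directions: pass to the uniform topology and use metric-space theory there.

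One detail in your pseudocompactness step needs repair. For a merely locally finite family of open sets with subordinate Urysohn functions, the claim that the sums $f_S$, $S\subseteq\omega_0$, are pairwise at supremum distance at least $1$ can fail: overlapping supports can even produce $f_S=f_T$ for $S\neq T$ (take two equal members of the family). You need the family to be pairwise disjoint, indeed discrete, which is always available when $X$ is not pseudocompact: take an unbounded $f\in C(X)$ (w.l.o.g. $f\ge 0$), points $x_n$ with values $f(x_n)$ increasing and spaced at least $2$ apart, and the preimages under $f$ of the open unit intervals centered at the $f(x_n)$; this is a countably infinite discrete family of nonempty open sets, and with Urysohn functions supported in its members your $2^{\omega_0}$ functions $f_S$ are indeed pairwise $1$-separated, contradicting separability of $(C(X),\tau_U)$. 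With this adjustment the argument is complete.
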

	\begin{proof}
		Recall that all considered cardinal invariants are less than or equal to the weight of $\tau_V$ and grater or equal to either cellularity or extent. Therefore we need to prove only \eqref{eq:XX}$\Rightarrow$\eqref{eq:weight}, \eqref{eq:cell}$\Rightarrow$\eqref{eq:XX} and \eqref{eq:extent}$\Rightarrow$\eqref{eq:XX}. First two implications are proven in \cite[Proposition 1.2]{Hola2013}. For the last suppose that $(C(X),\tau_V)$ has a countable extent then $(C(X),\tau_U)$ has a countable extent and therefore $X$ is compact and metrizable.
	\end{proof}
	\begin{theorem}[{\cite[Corollary 3.3]{Hola2015}}]\label{thm:Vbaire}
		The space $(C(X),\tau_V)$ is Baire.
	\end{theorem}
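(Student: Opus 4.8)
The plan is to show that the NONEMPTY player (Player II) has a winning strategy in the Choquet game played on $(C(X),\tau_V)$; since in a space where Player II wins the Choquet game Player I (the EMPTY player) has no winning strategy, such a space is Baire by Oxtoby's theorem, which is exactly what we want. Recall that in this game the two players produce a decreasing sequence of nonempty open sets, Player I choosing $U_n$ and Player II choosing $V_n$ with $U_{n+1}\subseteq V_n\subseteq U_n$, and Player II wins precisely when $\bigcap_n U_n=\bigcap_n V_n\neq\emptyset$.

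Throughout I would use Theorem \ref{thm:VplusV}, so that each $U_n$ may be assumed to contain a basic neighborhood of the form $W_n^+$ of a chosen function, together with the inclusion $\tau_U\subseteq\tau_V$, which makes every ball of the supremum metric a $\tau_V$-open set. The strategy for Player II is to build a uniformly Cauchy sequence of continuous functions trapped ever more tightly inside Player I's open sets. Concretely, responding to $U_n$ I would pick $f_n\in U_n$, choose an open $W_n\subseteq X\times\R$ with $f_n\in W_n^+\subseteq U_n$, and play $V_n:=(\widetilde W_n)^+\cap B(f_n,2^{-n})$, where $B(f_n,2^{-n})$ is the open supremum-metric ball and $\widetilde W_n$ is a carefully shrunk version of $W_n$ described below. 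The uniform balls force $\sup_x|f_{n+1}(x)-f_n(x)|<2^{-n}$, so $(f_n)$ converges uniformly to some $f\in C(X)$, which is the natural candidate for the common point.

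The main obstacle is that uniform convergence alone only guarantees that the graph of $f$ lies in the closure of each $W_n$, not in $W_n$ itself, and $X\times\R$ need not be normal, so I cannot simply shrink $W_n$ to an open set whose closure lies inside $W_n$. To get around this I would exploit the product structure in the $\R$-direction. For the continuous selection $f_n$ whose graph lies in the open set $W_n$, define the vertical radius
\[\rho_n(x)=\sup\{r>0;\ \{x\}\times(f_n(x)-r,f_n(x)+r)\subseteq W_n\},\]
which is positive at every point. Using the tube lemma applied to the compact vertical segments, together with continuity of $f_n$, one checks that $\rho_n$ is lower semicontinuous; hence $(x,t)\mapsto\tfrac12\rho_n(x)-|t-f_n(x)|$ is lower semicontinuous and
\[\widetilde W_n=\Big\{(x,t);\ |t-f_n(x)|<\tfrac12\rho_n(x)\Big\}\]
is an open subset of $X\times\R$ that contains the graph of $f_n$ and is contained in $W_n$.

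With this choice, whenever $m>n$ the function $f_m$ lies in $V_n$, so $|f_m(x)-f_n(x)|<\tfrac12\rho_n(x)$ for every $x$; letting $m\to\infty$ gives $|f(x)-f_n(x)|\le\tfrac12\rho_n(x)<\rho_n(x)$, and by the very definition of $\rho_n$ this places $(x,f(x))$ in $W_n$. Thus the graph of $f$ lies in $W_n$, i.e.\ $f\in W_n^+\subseteq U_n$, for every $n$, so that $f\in\bigcap_nU_n=\bigcap_nV_n$ and Player II wins. The only genuinely delicate point is the lower semicontinuity of $\rho_n$; everything else is bookkeeping with the supremum metric and the identification $\tau_V=\tau_V^+$ from Theorem \ref{thm:VplusV}.
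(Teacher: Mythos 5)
Your proof is correct, but note that the paper itself contains no argument to compare against: Theorem \ref{thm:Vbaire} is imported verbatim as \cite[Corollary 3.3]{Hola2015}, so Baireness of $(C(X),\tau_V)$ is treated as a known external fact. What you have done is supply a self-contained proof, and in fact of a stronger statement: you exhibit a winning strategy for player II, so you show $(C(X),\tau_V)$ is Choquet (weakly $\alpha$-favorable), which implies Baire via the Oxtoby--Kechris theorem exactly as you say. The argument itself is sound. The two standing facts you invoke ($\tau_V=\tau_V^+$ on $C(X)$, Theorem \ref{thm:VplusV}, and $\tau_U\subseteq\tau_V$) are available in the paper for arbitrary $X$, and your vertical-radius function $\rho_n$ is indeed lower semicontinuous: for $\rho_n(x_0)>c$ pick $c<r'<\rho_n(x_0)$, apply the tube lemma to the compact slice $\{x_0\}\times[f_n(x_0)-r',f_n(x_0)+r']\subseteq W_n$, and shrink the resulting tube using continuity of $f_n$; this gives $\rho_n>\tfrac{r'+c}{2}>c$ near $x_0$. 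The halving in $\widetilde W_n$ is the right device: it guarantees that the uniform limit $f$ satisfies $|f(x)-f_n(x)|\le\tfrac12\rho_n(x)<\rho_n(x)$, so the strict inclusion $f\in W_n^+$ survives the limit. This shrinking trick is what lets you bypass any normality assumption on $X\times\R$ (which the paper's own shrinking tools, Lemma \ref{lem:connected} and Proposition \ref{prop:base}, would require via normality or countable paracompactness of $X$), so your proof matches the full generality of the statement. What your route buys is self-containedness and an explicit game-theoretic strengthening; what the paper's citation buys is brevity and access to the sharper completeness results of \cite{Hola2015}, which are developed in the same game-theoretic spirit as your argument.
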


\section{Relationship of C(X) and MC(X)}
	We will need the following useful lemma.
	\begin{lemma}[{\cite[Lemma 4.1]{Hola2008}}]\label{lem:connected}
		Let $F\in L(X)$ and let $W\subseteq X\times\R$ be an open set containing $F$. Then there exists an open set $G\subseteq X\times R$ such that $G(x)$ is connected for each $x \in X$ and $F \subseteq G  \subseteq W$.
	\end{lemma}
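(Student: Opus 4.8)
The plan is to build $G$ fiberwise by taking connected components. For each $x\in X$ the value $F(x)$ is a nonempty compact convex subset of $\R$, hence a compact interval, and since $F\subseteq W$ we have $F(x)\subseteq W(x)$, where $W(x)=\{t\in\R;(x,t)\in W\}$ is open in $\R$. As $F(x)$ is connected it lies in a single connected component of $W(x)$; call this component $G(x)$, and set $G=\bigcup_{x\in X}\{x\}\times G(x)$. With this definition three of the four requirements are immediate: each $G(x)$ is an open interval, hence connected; $F(x)\subseteq G(x)$ gives $F\subseteq G$; and $G(x)\subseteq W(x)$ gives $G\subseteq W$.

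The only real content is to show that $G$ is open in $X\times\R$. Fix $(x_0,t_0)\in G$. Since $G(x_0)$ is an interval containing both $t_0$ and $F(x_0)$, it contains the compact segment $J_0$ joining $t_0$ to $F(x_0)$ (explicitly $J_0=[\min(t_0,\min F(x_0)),\max(t_0,\max F(x_0))]$); as $G(x_0)\subseteq W(x_0)$, the compact set $\{x_0\}\times J_0$ lies in the open set $W$.

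First I would apply the tube lemma to $\{x_0\}\times J_0\subseteq W$: covering $J_0$ by finitely many basic boxes contained in $W$ and taking the component of the union of their $\R$-factors that meets $J_0$, I obtain an open neighborhood $U_0\ni x_0$ and an open interval $I_0\supseteq J_0$ (so $t_0\in I_0$ and $F(x_0)\subseteq I_0$) with $U_0\times I_0\subseteq W$. Next I would invoke upper semicontinuity of $F$ at $x_0$: since $F(x_0)\subseteq I_0$ with $I_0$ open, there is a neighborhood $U_1\ni x_0$ with $F(x)\subseteq I_0$ for every $x\in U_1$. Then for each $x\in U_0\cap U_1$ the interval $I_0$ is a connected subset of $W(x)$ that contains $F(x)$, so $I_0$ lies in the component $G(x)$ of $W(x)$ through $F(x)$; hence $(U_0\cap U_1)\times I_0\subseteq G$ is a neighborhood of $(x_0,t_0)$ inside $G$.

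The step I expect to be the main obstacle is exactly this openness argument, and within it the delicate point is that the component of $W(x)$ through $F(x)$ can vary abruptly with $x$ (the map $F$ is only upper, not lower, semicontinuous, so $F(x)$ may shrink suddenly near $x_0$). The two ingredients that tame this are complementary: the tube lemma guarantees that the connecting segment stays in $W$ for nearby $x$, while upper semicontinuity guarantees that $F(x)$ does not escape the fattened interval $I_0$, so that $F(x)$ and $t_0$ remain in a common connected piece of $W(x)$. Once both neighborhoods are intersected, the verification that the resulting box lies in $G$ is routine.
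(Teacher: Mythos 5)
Your proof is correct. Note that the paper itself gives no proof of this lemma, quoting it instead from \cite[Lemma 4.1]{Hola2008}, so there is nothing to diverge from: your self-contained argument --- taking $G(x)$ to be the connected component of $W(x)$ containing the interval $F(x)$, and establishing openness of $G$ at $(x_0,t_0)$ by fattening the segment $J_0$ joining $t_0$ to $F(x_0)$ into an open box $U_0\times I_0\subseteq W$ via the tube lemma and then using upper semicontinuity to keep $F(x)\subseteq I_0$ for $x$ near $x_0$, so that $I_0$ stays in the component $G(x)$ --- is precisely the natural argument behind the cited result, and each step (including that the union of the finitely many covering intervals is itself an interval, since each meets the connected set $J_0$) checks out.
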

	Note that in the preceding Lemma $F\in[G]\subseteq [W]$ and $[G],[W]\in\tau_V^+$. The following result improves the one from \cite[Lemma 4.1]{Hola2005}, where $X$ is assumed to be countably paracompact and normal. We use some ideas from its proof.
	\begin{theorem}\label{thm:upper}
		Let $X$ be a  normal space, $F\in L(X)$ and $F\in [W]\in\tau_V^+$. Then $[W]\cap C(X)\not=\emptyset$.
	\end{theorem}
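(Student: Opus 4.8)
The plan is to reduce the statement, via Lemma~\ref{lem:connected}, to finding a continuous function whose graph lies in an open set with interval sections, and then to build such a function by gluing two one-sided applications of the insertion Proposition~\ref{prop:norm}. First I would invoke Lemma~\ref{lem:connected} to replace $W$ by an open $G$ with $F\subseteq G\subseteq W$ and $G(x)$ connected for every $x$; since $F\in[G]\subseteq[W]$, it suffices to find $g\in C(X)$ with graph inside $G$. Each section $G(x)$ is then a nonempty open interval $(a(x),b(x))$ with $a(x)\in[-\infty,\infty)$ and $b(x)\in(-\infty,\infty]$, and each section $F(x)$ is a compact interval $[\alpha(x),\beta(x)]$ with $a(x)<\alpha(x)\le\beta(x)<b(x)$. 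A direct check shows that $a$ is u.s.c. and $b$ is l.s.c. (because $G$ is open), while $\alpha$ is l.s.c. and $\beta$ is u.s.c. (because $F$ is cusco). Thus the task becomes: find $g\in C(X)$ with $a(x)<g(x)<b(x)$ for all $x$.

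The difficulty is twofold: the inequalities must be strict, and $a,b$ may be infinite where $G(x)$ is unbounded --- precisely the situation in which the earlier proof in \cite{Hola2005} needed countable paracompactness (a Dowker-type positive-minorant argument). I would remove both difficulties at once. For the infinities, compose everything with a fixed increasing homeomorphism $\theta\colon[-\infty,\infty]\to[-1,1]$; this preserves graphs, continuity, upper and lower semicontinuity, and strict order, and replaces $a,b,\alpha,\beta$ by bounded functions (still denoted the same) taking values in $[-1,1]$ and satisfying $a<\alpha\le\beta<b$ pointwise. A continuous $g$ with $a<g<b$ in this rescaled picture automatically satisfies $-1<g<1$, so $\theta^{-1}\circ g$ is the desired real-valued continuous function with graph in the original $G$.

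For the strictness, note that $\alpha\le\beta$ runs the ``wrong way'' for a single insertion (a cusco map need not admit a continuous selection), so instead I would solve the two sides separately and average. Applying Proposition~\ref{prop:norm} to the finite pair $a\le\alpha$ (lower bound u.s.c., upper bound l.s.c.) yields $g\in C(X)$ with $a\le g\le\alpha$; applying it to $\beta\le b$ yields $h\in C(X)$ with $\beta\le h\le b$. Then $g^{*}:=\tfrac12(g+h)$ is continuous, and the strict end-inequalities give the result: from $g\ge a$ and $h\ge\beta>a$ one gets $g^{*}>a$, while from $g\le\alpha<b$ and $h\le b$ one gets $g^{*}<b$. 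Hence $g^{*}$ has graph inside the rescaled $G$, and $\theta^{-1}\circ g^{*}\in[G]\cap C(X)\subseteq[W]\cap C(X)$. The main obstacle is exactly the passage from the non-strict Katětov--Tong insertion to a strictly interior continuous curve without assuming countable paracompactness; the averaging of two one-sided insertions, combined with the rescaling $\theta$, is what makes this possible using only normality.
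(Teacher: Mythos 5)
Your proposal is correct and takes essentially the same approach as the paper's own proof: after the reduction via Lemma~\ref{lem:connected}, the paper likewise passes to a bounded picture by a homeomorphism (there $\phi(x,t)=(x,\tan t)$ onto $X\times(-\pi/2,\pi/2)$, playing the role of your $\theta$), performs the same two one-sided insertions from Proposition~\ref{prop:norm} (a continuous $g_1$ between $\inf W_0$ and $\inf F_0$, and a continuous $g_2$ between $\sup F_0$ and $\sup W_0$), and then averages $g_0=\tfrac{1}{2}(g_1+g_2)$ to get strict inequalities before pulling back with $\tan$. The differences are purely cosmetic.
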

	\begin{proof}
		By Lemma \ref{lem:connected} we can assume that $W(x)$ is connected for every $x \in X$. 
		Let $\phi : X \times  (- \pi/2, \pi/2)  \to X \times R$ be the homeomorphism defined by $\phi(x, t) = (x, \tan t)$, let $F_0=\phi^{-1}(F)$ and $W_0=\phi^{-1}(W)$. Define $f_1,f_2,h_1,h_2:X\to R$ by
		\[\begin{matrix}
		h_1(x)=\inf F_0(x),& f_1(x)=\inf W_0(x),\\
		h_2(x)=\sup W_0(x),& f_2(x)=\sup F_0(x).
		\end{matrix}\]
		One can verify that $f_1,f_2$ are u.s.c. and $h_1,h_2$ are l.s.c. and $f_1<h_1\le f_2<h_2$.  By Proposition \ref{prop:norm} there are $g_1,g_2\in C(X)$ such that
		\[f_1\le g_1\le h_1\le f_2\le g_2\le h_2.\]
		Put $g_0=\frac{g_1+g_2}{2}$ and observe that 
		\[-\frac{\pi}{2}\le f_1<g_0<h_2\le \frac{\pi}{2},\]
		therefore $g=\tan\circ\, g_0\in C(X)$ and $g(x)\in W(x)$ for all $x\in X$.
	\end{proof}
	\begin{corollary}\label{cor:dense}
		Let $X$ be a normal space. Then $C(X)$ is dense in $(L(X),\tau_V^+)$ and consequently also in $(L_0(X),\tau_V^+)$ and $(MC(X),\tau_V^+)$.
	\end{corollary}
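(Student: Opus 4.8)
The plan is to read off density in $L(X)$ directly from Theorem \ref{thm:upper}, and then transfer it to the two smaller spaces by a standard subspace-closure argument, using that $C(X)$ sits inside all three spaces. Since Theorem \ref{thm:upper} already does the analytic work, this corollary is essentially a formal consequence.

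First I would recall that $\{[W]\cap L(X);\ W\subseteq X\times\R\text{ open}\}$ is a base for $\tau_V^+$ on $L(X)$. To show that $C(X)$ is dense in $(L(X),\tau_V^+)$ it therefore suffices to verify that every nonempty basic open set meets $C(X)$. So I would fix an open $W\subseteq X\times\R$ with $[W]\cap L(X)\neq\emptyset$ and pick $F\in L(X)$ with $F\subseteq W$, i.e. $F\in[W]$. Since $X$ is normal, Theorem \ref{thm:upper} applies and yields some $g\in C(X)$ with $g\in[W]$, so $g\in[W]\cap C(X)$. As the chosen basic open set was arbitrary, this gives density of $C(X)$ in $(L(X),\tau_V^+)$.

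Second, for the subspaces I would invoke the inclusions $C(X)\subseteq L_0(X)\subseteq L(X)$ and $C(X)\subseteq MC(X)\subseteq L(X)$ recorded in the preliminaries. Because closure in a subspace is the trace of the closure in the ambient space, and because $C(X)$ has just been shown to be dense in $L(X)$, I would compute $\cll_{L_0(X)}C(X)=\cll_{L(X)}C(X)\cap L_0(X)=L(X)\cap L_0(X)=L_0(X)$, and identically $\cll_{MC(X)}C(X)=MC(X)$. Hence $C(X)$ is dense in $(L_0(X),\tau_V^+)$ and in $(MC(X),\tau_V^+)$ as well.

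The only point that requires a little care is the transfer to the subspaces: one must use that $C(X)$ is actually contained in each subspace, so that its ambient closure, being all of $L(X)$, traces down to the entire subspace. There is no substantial obstacle beyond applying Theorem \ref{thm:upper} to an arbitrary nonempty basic $\tau_V^+$-open set, which is exactly the form of its hypothesis.
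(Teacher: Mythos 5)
Your proposal is correct and matches the paper's intent exactly: the paper states this as an immediate corollary of Theorem \ref{thm:upper}, which says precisely that every nonempty basic $\tau_V^+$-open set $[W]$ containing some $F\in L(X)$ meets $C(X)$, and the transfer to $(L_0(X),\tau_V^+)$ and $(MC(X),\tau_V^+)$ is the standard subspace-closure argument you give, using $C(X)\subseteq MC(X)\subseteq L_0(X)\subseteq L(X)$. Your write-up simply makes explicit the routine steps the paper leaves to the reader.
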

	From Theorems \ref{thm:Vbaire} and  \ref{thm:VplusV} we know that for a Tychonoff space $X$, the space $(C(X),\tau_V^+)$ is a Baire space and from this we immediately have the following.
	\begin{corollary}\label{cor:densebaire}
		Let $X$ be a normal space. Then $(L(X),\tau_V^+)$, $(L_0(X),\tau_V^+)$ and $(MC(X),\tau_V^+)$ are Baire spaces.
	\end{corollary}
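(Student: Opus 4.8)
The plan is to transfer the Baire property from $(C(X),\tau_V^+)$ to the three larger spaces by exploiting the density established in Corollary \ref{cor:dense} together with a standard dense-subspace criterion for the Baire property. The key observation is that $C(X)$ sits densely inside each of $(L(X),\tau_V^+)$, $(L_0(X),\tau_V^+)$ and $(MC(X),\tau_V^+)$, and that a space containing a dense Baire subspace that is suitably embedded is itself Baire. More precisely, I would invoke the fact that if $D$ is a dense subspace of a topological space $Y$ and $D$ is Baire, then $Y$ is Baire provided $D$ is \emph{$G_\delta$-dense} or, more simply, that every nonempty open subset of $Y$ meets $D$ in a set that is open in $D$ and dense there — which is automatic for a dense subspace. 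In fact the cleanest route uses the elementary lemma: a space is Baire iff it has a dense Baire subspace whose topology is the subspace topology and which is open-hereditarily dense, and density alone suffices here because a dense open subset of $Y$ restricts to a dense open subset of $D$.

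Concretely, let $Y$ denote any one of the three spaces with $\tau_V^+$, and let $D=C(X)$, which is dense in $Y$ by Corollary \ref{cor:dense} (the hypothesis that $X$ is normal is exactly what makes that corollary available). From Theorems \ref{thm:Vbaire} and \ref{thm:VplusV} we know $(C(X),\tau_V^+)=(C(X),\tau_V)$ is Baire, as noted in the paragraph preceding the statement. First I would take a countable family $\{U_n\}_{n\in\omega_0}$ of dense open subsets of $Y$ and a nonempty open $O\subseteq Y$, and aim to show $O\cap\bigcap_n U_n\neq\emptyset$. Each $U_n\cap D$ is open in $D$, and it is dense in $D$: indeed, if $V$ is nonempty open in $D$, write $V=V'\cap D$ with $V'$ open in $Y$; then $V'$ is nonempty (as $D$ is dense), so $U_n\cap V'\neq\emptyset$, and since $U_n\cap V'$ is open in $Y$ it again meets the dense set $D$, giving $U_n\cap V\neq\emptyset$. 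Likewise $O\cap D$ is nonempty open in $D$.

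Applying the Baire property of $D$ to the dense open sets $\{U_n\cap D\}$ and the nonempty open set $O\cap D$ yields a point $p\in (O\cap D)\cap\bigcap_n(U_n\cap D)\subseteq O\cap\bigcap_n U_n$, so the latter is nonempty and $Y$ is Baire. This handles $(L(X),\tau_V^+)$; the statements for $(L_0(X),\tau_V^+)$ and $(MC(X),\tau_V^+)$ follow by the identical argument, since Corollary \ref{cor:dense} gives density of $C(X)$ in each. I do not expect a genuine obstacle here: the only subtlety is the routine verification that restricting to a dense subspace preserves both openness and density of the $U_n$, and that the subspace topology of $C(X)$ inherited from any of these spaces coincides with $\tau_V^+=\tau_V$ on $C(X)$, which is guaranteed because all of them carry the subspace topology from the common ambient hyperspace $CL(X\times\R)$.
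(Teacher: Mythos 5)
Your proof is correct and follows exactly the route the paper intends: Corollary \ref{cor:dense} gives density of $C(X)$ in each of the three spaces, Theorems \ref{thm:Vbaire} and \ref{thm:VplusV} give Baireness of $(C(X),\tau_V^+)$, and the standard fact that a space containing a dense Baire subspace is itself Baire finishes the argument. The paper presents the corollary as an immediate consequence of these same ingredients; your write-up merely spells out the dense-subspace lemma that the paper leaves implicit.
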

	\begin{theorem}\label{thm:whole}
		Let $X$ be a normal space, $F\in L_0(X)$ and $F\in [W_1,...,W_n]\in\tau_V$. Then $[W_1,...,W_n]\cap C(X)\not=\emptyset$.
	\end{theorem}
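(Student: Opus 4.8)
The plan is to separate the upper and lower Vietoris conditions: the containment $F\subseteq\bigcup_k W_k$ is already handled by Theorem \ref{thm:upper}, so the real content is to force the graph to \emph{meet} each $W_k$. I would reduce this to a finite interpolation problem and solve it by locally blending a single continuous selection towards finitely many targets, using Lemma \ref{lem:connected} to make the vertical sections convex so that convex combinations stay admissible.

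First I would set $\Omega=\bigcup_{k=1}^n W_k$. Since $F\in[W_1,\dots,W_n]$ we have $F\subseteq\Omega$, so Lemma \ref{lem:connected} yields an open $G$ with $F\subseteq G\subseteq\Omega$ and $G(x)$ connected (hence an interval, i.e.\ convex) for every $x$. Applying Theorem \ref{thm:upper} to $G$ (note $F\in L(X)$ and $F\in[G]\in\tau_V^+$) produces $g^*\in C(X)$ with $g^*(x)\in G(x)$ for all $x$, which already witnesses $g^*\subseteq\Omega$. For each $k$ I would fix $(x_k,y_k)\in F\cap W_k$; since $F\subseteq G$ and $G,W_k$ are open, a small box around $(x_k,y_k)$ lies in $G\cap W_k$, so $y_k\in G(x)$ on a neighborhood of $x_k$.

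If the routing points $x_k$ were pairwise distinct, I would choose pairwise disjoint open $O_k\ni x_k$ small enough that $y_k\in G(x)$ for $x\in O_k$, use normality (Urysohn's lemma) to get continuous bumps $\lambda_k\colon X\to[0,1]$ with $\lambda_k(x_k)=1$ and $\operatorname{supp}\lambda_k\subseteq O_k$, and set $g=g^*+\sum_k\lambda_k\,(y_k-g^*)$. On each $O_k$ the value $g(x)$ is a convex combination of $y_k\in G(x)$ and $g^*(x)\in G(x)$, hence lies in the interval $G(x)$; outside the $O_k$ it equals $g^*$. Thus $g\subseteq G\subseteq\Omega$ gives $g\in\big(\bigcup_k W_k\big)^+$, while $g(x_k)=y_k$ gives $(x_k,g(x_k))\in W_k$, i.e.\ $g\in W_k^-$ for each $k$; so $g\in[W_1,\dots,W_n]\cap C(X)$.

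The hard part is precisely that the $x_k$ need \emph{not} be distinct, and at a shared non-isolated point two targets $y_k\ne y_{k'}$ cannot both be met by a single-valued $g$. This is where the hypothesis $F\in L_0(X)$ is essential. If $x_k=x_{k'}$ at a non-isolated point, then that point has infinite neighborhoods and the box around $(x_k,y_k)$ lies in the open set $G\cap W_k$, so I can move $x_k$ to a nearby \emph{distinct} point with the same $y_k$, still inside $G\cap W_k$, avoiding the finitely many other routing points. If instead $x_k$ is \emph{isolated}, then $F(x_k)$ is a singleton $\{v_{x_k}\}$ by the definition of $L_0(X)$, forcing $y_k=v_{x_k}$; hence every index whose point lands on that isolated point shares the common value $v_{x_k}\in\bigcap W_k(x_k)\cap G(x_k)$, and they can all be routed through $(x_k,v_{x_k})$ simultaneously (the bump at an isolated point being the indicator of that point, which is continuous). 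After these adjustments the routing data is consistent and the neighborhoods can be taken disjoint, so the blending argument applies verbatim.
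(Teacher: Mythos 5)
Your proof is correct and follows essentially the same route as the paper's: reduce to a connected-valued open set via Lemma \ref{lem:connected}, obtain a continuous selection via Theorem \ref{thm:upper}, make the routing points distinct using the $L_0(X)$ hypothesis (perturbing inside the open set at non-isolated points, merging coincident targets at isolated points), and then modify the selection locally on pairwise disjoint neighborhoods. The only cosmetic difference is in the last step, where the paper invokes Tietze's extension theorem while you use Urysohn bumps combined with convexity of the sections $G(x)$ --- both being equivalent applications of normality.
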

	\begin{proof}
		By Lemma \ref{lem:connected} we can choose open $W_0\subseteq \bigcup_{n=1}^nW_k$ such that $F\in W_0^+$ and $W_0(x)$ is connected for all $x\in X$. By Theorem \ref{thm:upper} there is $f\in C(X)\cap W_0^+$. Now for $k=1,..,n$ choose $(x_k,t_k)\in W_0\cap W_k$. Since $F(x)$ is a singleton for every isolated $x$, we may assume that all $x_k$ are distinct. Now choose open intervals $V_k$ for $k=1,..,n$ such that $\{t_k,f(x_k)\}\subseteq V_k\subseteq\cl{V}_k\subseteq W_0(x_k)$ and choose open sets $U_k$ such that $f(U_k)\subseteq V_k$, $U_k\times\cl{V}_k\subseteq W_0$ and $U_k$ are pairwise disjoint. Then from Tietze's extension theorem there is $g\in C(X)$ such that $f$ and $g$ agree on $X\setminus (U_1\cup..\cup U_n)$, $g(x_k)=t_k$ and $g(U_k)\subseteq\cl{V}_k$. Therefore $g\in [W_1,...,W_n]\cap C(X)$.
	\end{proof}
	Since minimal cusco maps are single valued at isolated points of $X$ we have the following corollary.
	\begin{corollary}\label{cor:denseV}
		Let $X$ be a normal space. Then $C(X)$ is dense in $(L_0(X),\tau_V)$ and consequently also in $(MC(X),\tau_V)$.
	\end{corollary}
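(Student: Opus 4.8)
The plan is to read off density directly from Theorem \ref{thm:whole}, which already carries out all of the analytic work. Recall that the sets $[W_1,\dots,W_n]$, with $W_1,\dots,W_n$ ranging over open subsets of $X\times\R$, form a base for $\tau_V$ on $CL(X\times\R)$, and hence their traces on $L_0(X)$ form a base for the subspace topology $\tau_V$ on $L_0(X)$. To establish that $C(X)$ is dense in $(L_0(X),\tau_V)$ it therefore suffices to show that every nonempty basic open subset of this subspace meets $C(X)$.

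So I would fix a nonempty basic open set $[W_1,\dots,W_n]\cap L_0(X)$ and pick any $F\in L_0(X)$ lying in it, so that $F\in[W_1,\dots,W_n]\in\tau_V$. Theorem \ref{thm:whole} applies verbatim to this situation and yields $[W_1,\dots,W_n]\cap C(X)\neq\emptyset$. Since the chosen basic open set was arbitrary, $C(X)$ meets every nonempty open subset of $(L_0(X),\tau_V)$, which is precisely the assertion of density.

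For the stated consequence I would invoke the inclusions $C(X)\subseteq MC(X)\subseteq L_0(X)$ recorded in the Preliminaries, using that minimal cusco maps are single-valued at isolated points and hence lie in $L_0(X)$. A set that is dense in a space remains dense in any intermediate subspace containing it: the closure of $C(X)$ taken inside $MC(X)$ equals its closure inside $L_0(X)$ intersected with $MC(X)$, and since the latter closure is all of $L_0(X)$, the closure in $MC(X)$ is $L_0(X)\cap MC(X)=MC(X)$. This gives density of $C(X)$ in $(MC(X),\tau_V)$.

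The argument carries no genuine obstacle; all the difficulty is concentrated in Theorem \ref{thm:whole}, whose proof rests on Lemma \ref{lem:connected}, the normal-space insertion of Proposition \ref{prop:norm}, and Tietze's extension theorem. The only points deserving a moment's attention are bookkeeping ones, namely that the traces of the $[W_1,\dots,W_n]$ genuinely form a base for the subspace topology and that density transfers correctly through the subspace topology down to $MC(X)$, both of which are routine.
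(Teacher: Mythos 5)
Your proposal is correct and is essentially the paper's own argument: the paper derives this corollary directly from Theorem \ref{thm:whole}, noting only that minimal cusco maps are single-valued at isolated points so that $MC(X)\subseteq L_0(X)$, exactly as you do. Your additional bookkeeping about basic open sets and the transfer of density to the intermediate subspace $MC(X)$ just makes explicit what the paper leaves implicit.
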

	Simmilarly to \ref{cor:densebaire} we have the following.
	\begin{corollary}
		Let $X$ be a normal space. Then $(L_0(X),\tau_V)$ and $(MC(X),\tau_V)$ are Baire spaces.
	\end{corollary}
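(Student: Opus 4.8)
The plan is to follow exactly the pattern of Corollary~\ref{cor:densebaire}, replacing the upper Vietoris topology by the full Vietoris topology throughout. The essential mechanism is the elementary fact that a topological space which contains a dense Baire subspace is itself a Baire space. Granting this, the two ingredients I need are that $(C(X),\tau_V)$ is Baire and that $C(X)$ sits densely inside $(L_0(X),\tau_V)$ and $(MC(X),\tau_V)$; both are already available in the excerpt.

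First I would recall from Theorem~\ref{thm:Vbaire} that $(C(X),\tau_V)$ is a Baire space. Note that, unlike in Corollary~\ref{cor:densebaire}, no appeal to Theorem~\ref{thm:VplusV} is needed here, since we work directly with $\tau_V$ rather than with $\tau_V^+$. Next, Corollary~\ref{cor:denseV} gives that, for normal $X$, the subspace $C(X)$ is dense in $(L_0(X),\tau_V)$ and hence also in $(MC(X),\tau_V)$. Since the Vietoris topology on each of these spaces is inherited from the common ambient hyperspace $CL(X\times\R)$, the subspace topology that $C(X)$ acquires from $L_0(X)$ (or from $MC(X)$) agrees with $\tau_V$ on $C(X)$, so the Baire property of $C(X)$ furnished by Theorem~\ref{thm:Vbaire} is precisely the one we may use.

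It then remains to invoke the dense-Baire-subspace principle. Concretely, if $D$ is dense in $Z$ and $D$ is a Baire space, then for any sequence $(G_n)$ of dense open subsets of $Z$ each trace $G_n\cap D$ is open and dense in $D$, so $D\cap\bigcap_n G_n$ is dense in $D$ by the Baire property of $D$; by transitivity of density this set is then dense in $Z$, whence $\bigcap_n G_n$ is dense in $Z$. Applying this with $Z=(L_0(X),\tau_V)$ and with $Z=(MC(X),\tau_V)$, taking $D=C(X)$ in each case, yields the claim.

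The argument is almost entirely bookkeeping, and the only point requiring any care is the dense-Baire-subspace principle itself, which carries the real content. I therefore expect the main (though minor) obstacle to be stating this principle cleanly and verifying that density passes correctly through the trace of a dense open set, i.e. that $G_n\cap C(X)$ really is dense in $C(X)$; this step uses both the density of $C(X)$ in $Z$ and the density of $G_n$ in $Z$, and it is exactly the place where the hypothesis that $X$ is normal enters, via Corollary~\ref{cor:denseV}.
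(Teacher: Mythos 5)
Your proposal is correct and matches the paper's own (implicit) argument exactly: the paper derives this corollary "similarly to Corollary~\ref{cor:densebaire}", i.e.\ by combining the Baireness of $(C(X),\tau_V)$ from Theorem~\ref{thm:Vbaire} with the density of $C(X)$ in $(L_0(X),\tau_V)$ and $(MC(X),\tau_V)$ from Corollary~\ref{cor:denseV}, via the principle that a space with a dense Baire subspace is Baire. Your explicit verification of that principle, and your observation that normality enters only through Corollary~\ref{cor:denseV}, are both accurate.
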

	The following example shows that the normality of $X$ is essential in Theorems \ref{thm:upper} and \ref{thm:whole}. It is similar to \cite[Example 9]{Artico2008}.
	\begin{example}
		Let $X$ be a space of nonlimit ordinals less than $\omega_1$, together with $\omega_1$, equipped with the order topology. Let $X=X_0\cup X_1\cup\{\omega_1\},$ where $X_0,X_1$ are disjoint uncountable sets not containing $\omega_1$. Let $Y=[0,\omega_0]$ with the order topology and $Y_0=[0,\omega_0)$. Let $Z=X\times Y\setminus\{(\omega_1,\omega_0)\}$. Let $F\subseteq Z\times[0,1]$ be a  map defined by
		\[F(x,y)=\begin{cases}
		\{0\},&x\in X_0,y\in Y,\\
		\{1\},&x\in X_1,y\in Y,\\
		[0,1],&x=\omega_1,y\in Y_0.
		\end{cases}\]
		One can verify that $Z$ is not normal and $F$ is cusco. We will show that there is an open set $W\subseteq Z\times\R$ such that $F\subseteq W$ and $W^+\cap C(Z)=\emptyset.$
	
		Put $W=\Big(X_0\times Y\times(-1,1/3)\Big)\cup\Big(X_1\times Y\times(2/3,2)\Big)\cup\Big(X\times Y_0\times(-1,2)\Big)$. Suppose that there is $f\in C(Z)\cap W^+.$
	
		Since $f$ is continuous then for every fixed $y<\omega_0$ we have that $f(x,y)$ is eventually constant, i.e. there is $x_y$ such that for every $x>x_y$ holds $f(x,y)=f(\omega_1,y)$. Since $cf(\omega_1)=\omega_1$ we have that $\hat{x}:=\sup \{x_y;y<\omega_0\}<\omega_1$. 
	
		Since $f\in W^+$, for every $x_0\in X_0$ and $x_1\in X_1$ we have $f(x_0,\omega_0)<1/3$ and $f(x_1,\omega_0)>2/3$. Since $\hat{x}$ is countable, we can choose $\hat{x}_0\in X_0,\ \hat{x}_0>\hat{x}$ and $\hat{x}_1\in X_1,\ \hat{x}_1>\hat{x}$. Since $f$ is continuous, there is $n_0<\omega_0$ such that for every $y>n_0$ we have $f(\hat{x}_0,y)<1/3$ and $f(\hat{x}_1,y)>2/3$, which contradicts the fact that both values are supposed to be equal to $f(\omega_1,y)$.
	\end{example}
	Notice that in the above proofs (especially in the proof of Theorem \ref{thm:whole}) the "nice" part of $\tau_V$ is actually $\tau_V^+$ and $\tau_V^-$ is a nuisance that we have to deal with. From Theorem \ref{thm:VplusV} we know that $\tau_V=\tau_V^+$ for $C(X)$. Since for a Baire space $X$, spaces $MC(X)$ and $C(X)$ are quite close, in the sense that any $F\in MC(X)$ is single-valued on a residual set, it is natural to ask if this is true also for $MC(X)$. Unfortunately the answer is no as the following example shows.
	\begin{example}
		Let $X=R\setminus\left\{\frac{1}{n};n\in\omega_0\setminus\{0\}\right\}$ and let $F,f_n\subseteq X\times\R$ for $n\in\omega_0\setminus\{0\}$ be defined by
		\[F(x)=\begin{cases}
		\{0\}& x<0,\\ [0,1]&x=0,\\ \{1\} &x>0;
		\end{cases}\qquad
		f_n(x)=\begin{cases}
		0& x<\frac{1}{n},\\ 1 &x>\frac{1}{n}.
		\end{cases}	\]
		We can easily check that $X$ is Baire; $F,f_n (n\in\omega_0\setminus\{0\})\in MC(X)$ and $f_n\to F$ in $\tau_V^+$. Now put $W=X\times (0,1)$. Since $F\in W^-$ and $f_n\not\in W^-$ for $n\in\omega_0\setminus\{0\}$, we have that $f_n\not\to F$ in $\tau_V^-$.
	\end{example}
	Nevertheless the topology $\tau_V^+$ may be better suited for the study of the space $MC(X)$.
	\begin{proposition}\label{prop:hausdorff}
		For a Baire space $X$, the space $(MC(X),\tau_V^+)$ is Hausdorff.
	\end{proposition}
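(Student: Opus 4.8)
The plan is to separate two distinct maps $F,G\in MC(X)$ by producing a single point $x_0$ at which their values are disjoint, and then exhibiting explicit disjoint basic $\tau_V^+$-neighbourhoods. The reduction to one ``bad'' point is what makes $\tau_V^+$ usable here: once we have $x_0$ with $F(x_0)\cap G(x_0)=\emptyset$, say $F(x_0)=\{a\}$ and $G(x_0)=\{b\}$ with $a<b$, we pick $c\in(a,b)$ and set
\[W_F=\big((X\setminus\{x_0\})\times\R\big)\cup\big(X\times(-\infty,c)\big),\qquad W_G=\big((X\setminus\{x_0\})\times\R\big)\cup\big(X\times(c,+\infty)\big).\]
These are open (here we use that $\{x_0\}$ is closed, as $X$ is Hausdorff), they contain $F$ resp. $G$, and since $x_0\notin X\setminus\{x_0\}$ their fibres over $x_0$ are $W_F(x_0)=(-\infty,c)$ and $W_G(x_0)=(c,+\infty)$, which are disjoint. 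Hence any $H\subseteq W_F\cap W_G$ would satisfy $H(x_0)=\emptyset$, impossible for an usco map; so $W_F^+$ and $W_G^+$ are disjoint $\tau_V^+$-neighbourhoods of $F$ and $G$ in $MC(X)$.

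To locate such an $x_0$ I would exploit the single-valuedness guaranteed by the Baire hypothesis. By Theorem \ref{thm:baire} the sets $S(F)$ and $S(G)$ are dense $G_\delta$-subsets of $X$, so since $X$ is Baire their intersection $D:=S(F)\cap S(G)$ is again dense. On $D$ both maps are single-valued, and taking the quasicontinuous subcontinuous selections $f,g$ provided by Theorem \ref{thm:characterization} we have $F(x)=\{f(x)\}$ and $G(x)=\{g(x)\}$ for every $x\in D$. It therefore suffices to show that $F\neq G$ forces $f(x_0)\neq g(x_0)$ for some $x_0\in D$ — equivalently, that if $f$ and $g$ agree on $D$ then $F=G$.

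This last implication is the crux and the step I expect to be delicate. The idea is that a minimal cusco map is recovered from its selection via graph closure, and that the graph closure does not distinguish $f$ from its restriction to the dense set $D$. Concretely I would prove $\cl f=\cl{f|_D}$ (closures of graphs taken in $X\times\R$): the inclusion $\cl{f|_D}\subseteq\cl f$ is trivial, and for the reverse one takes $(x,y)\in\cl f$, i.e.\ a net $x_\lambda\to x$ with $f(x_\lambda)\to y$, and uses quasicontinuity of $f$ at each $x_\lambda$ together with density of $D$ to replace each $x_\lambda$ by a nearby point of $D$ carrying an almost equal $f$-value, producing a net in $D$ witnessing $(x,y)\in\cl{f|_D}$; a diagonalisation then finishes it. If $f$ and $g$ agree on $D$ then $\cl f=\cl{f|_D}=\cl{g|_D}=\cl g$, whence $F(x)=\co[\cl f(x)]=\co[\cl g(x)]=G(x)$ for every $x$ by Theorem \ref{thm:characterization}, i.e.\ $F=G$. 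I expect the quasicontinuity/diagonalisation argument establishing $\cl f=\cl{f|_D}$ to be the only subtle point; everything else is routine verification.
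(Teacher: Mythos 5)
Your proof is correct and follows essentially the same route as the paper: locate a point of $S(F)\cap S(G)$ (dense by Theorem \ref{thm:baire} and the Baire property of $X$) at which the singleton values differ, then separate $F$ and $G$ with exactly the paper's punctured open sets of the form $\bigl((X\setminus\{x_0\})\times\R\bigr)\cup\bigl(X\times U\bigr)$, whose $+$-neighbourhoods are disjoint because an usco map cannot have an empty value. The only divergence is that where the paper justifies ``agreement on a dense set forces $F=G$'' by a one-line appeal to Theorem \ref{thm:characterization}, you prove that step explicitly via the graph-closure identity $\cl{f}=\cl{f\restriction_D}$ for a quasicontinuous selection $f$ and dense $D$ --- a correct, more self-contained justification of the same step.
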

	\begin{proof}
		Let $F,G\in MC(X)$ s.t. $F\not=G$. From Theorem \ref{thm:baire} we have that $S(F)\cap S(G)$ is a dense $G_\delta-$set i.e. there is $x\in S(F)\cap S(G)$ s.t. $F(x)\not= G(x)$; otherwise they would agree on a dense subset and by Theorem \ref{thm:characterization}, they would be equal. Choose open disjoint $U,V\subseteq \R$ such that $F(x)\in U$ and $G(x)\in V$. Sets $U_0=[X\times U]\cup[(X\setminus\{x\})\times\R]$ and $V_0=[X\times V]\cup[(X\setminus\{x\})\times\R]$ are open, disjoint and $F\subseteq U_0,\ G\subseteq V_0$.
	\end{proof}
	Suppose again that $X$ is a Baire space. The fact that minimal cusco maps are single-valued on a residual set allows us to define an addition on the set $MC(X)$ in a natural way. Notice that for $F,G\in MC(X)$ we have that $S(F)\cap S(G)$ is a dense $G_\delta-$set and \[f=F\restriction_{S(F)\cap S(G)},\quad g=G\restriction_{S(F)\cap S(G)}\]
	are continuous functions which are also subcontinuous (by \cite[3.3 and 3.8]{Hola2012} every selection of an usco map is subcontinuous and therefore its densely defined restriction is also subcontinuous). We can define \[(F+G)(x)=\co[\cl{(f+g)}(x)].\]
	From Theorem \ref{thm:characterization} we have that $F+G\in MC(X)$.
	However this addition is not continuous in $\tau_V^+$ nor in $\tau_V$ as the following example shows.
	\begin{example}
		Let $F,G,f_n,g_n (n\in\omega_0\setminus\{0\})\subseteq \R\times \R$ be defined by
		\[F(x)=\begin{cases}
		\{0\}& x<0,\\ [0,1]&x=0,\\ \{1\} &x>0;
		\end{cases}\qquad
		G(x)=\begin{cases}
		\{0\}& x<0,\\ [-1,0]&x=0,\\ \{-1\} &x>0;
		\end{cases}\]
		\[f_n(x)=\begin{cases}
		0& x<-\frac{1}{n},\\ nx+1 & x\in\left[-\frac{1}{n},0\right],\\1 &x>0;
		\end{cases}\qquad
		g_n(x)=\begin{cases}
		0& x<0,\\ -nx & x\in\left[0,\frac{1}{n}\right],\\-1 &x>\frac{1}{n}.
		\end{cases}\]
		One can easily see that $F+G=0$, $f_n\to F$, $g_n\to G$ in $\tau_V$ and obviously they converge also in $\tau_V^+$. But $f_n+g_n\not\to 0=F+G$ in $\tau_V^+$ and therefore neither in $\tau_V$.
	\end{example}
\section{Countability properties of $MC(X)$}
	\begin{theorem}\label{thm:chi}
		Let $X$ be a Tychonoff space. If any of the following spaces $(MC(X),\tau_V^+)$, $(L_0(X),\tau_V^+)$, $(L(X),\tau_V^+)$, $(MC(X),\tau_V)$, $(L_0(X),\tau_V)$ or $(L(X),\tau_V)$ is first countable, then $X$ is countably compact.
	\end{theorem}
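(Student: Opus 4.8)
The plan is to reduce all six cases to the single equivalence for $C(X)$ recorded in Theorem \ref{thm:CXfirst}, by exploiting the containment $C(X)\subseteq MC(X)\subseteq L_0(X)\subseteq L(X)$ together with the fact that first countability is a hereditary property. The point is that $C(X)$ sits inside each of the six ambient spaces as a topological subspace, so first countability of the ambient space is inherited by $C(X)$, and there the countability behaviour is already completely understood.

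First I would record the (routine but essential) identification of subspace topologies. The subbasic Vietoris sets $V^+$ and $V^-$ restrict from $CL(X\times\R)$ to any of the intermediate spaces in the obvious way, so the topology that $C(X)$ inherits as a subspace of $(L(X),\tau_V)$, and hence of its subspaces $(L_0(X),\tau_V)$ and $(MC(X),\tau_V)$, is exactly $\tau_V$ on $C(X)$; likewise the topology inherited from $(L(X),\tau_V^+)$, $(L_0(X),\tau_V^+)$, $(MC(X),\tau_V^+)$ is exactly $\tau_V^+$ on $C(X)$. Invoking Theorem \ref{thm:VplusV}, which gives $\tau_V^+=\tau_V$ on $C(X)$, collapses the three upper Vietoris cases onto the three Vietoris cases: in every one of the six situations the subspace topology induced on $C(X)$ is precisely $\tau_V$. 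This is where Theorem \ref{thm:VplusV} does its work, letting the $\tau_V^+$ cases piggyback on the full Vietoris result.

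Finally, since first countability passes to subspaces, if any of the six ambient spaces is first countable then $(C(X),\tau_V)$ is first countable, and the equivalence of items in Theorem \ref{thm:CXfirst} (first countability of $(C(X),\tau_V)$ versus countable compactness of $X$) immediately yields that $X$ is countably compact. I do not expect any genuinely hard step here: note in particular that no use of the density of $C(X)$ (Corollary \ref{cor:dense}) is needed, since the argument runs from the large space down to the subspace rather than the other way around. The only thing requiring care is the bookkeeping identifying the restricted (upper) Vietoris topologies on $C(X)$ with the intrinsic ones, so that Theorems \ref{thm:VplusV} and \ref{thm:CXfirst} apply verbatim.
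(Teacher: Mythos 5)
Your proposal is correct and is essentially the paper's own proof: the authors likewise observe that first countability of any of the six spaces is inherited by the subspace $C(X)$, where $\tau_V^+=\tau_V$ by Theorem \ref{thm:VplusV}, and then invoke Theorem \ref{thm:CXfirst} to conclude that $X$ is countably compact. Your write-up merely spells out the subspace-topology bookkeeping that the paper leaves implicit.
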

	\begin{proof}
		First countability of any of these spaces implies first countability of $(C(X),\tau_V^+)$ and from Theorem \ref{thm:CXfirst} we have that $X$ has to be countably compact.
	\end{proof}
	\begin{proposition}
		Let $X$ be a normal space. Then TFAE:
		\begin{enumerate}
			\item $(L(X),\tau_V^+)$ is first countable,
			\item $X$ is countably compact and perfectly normal.
		\end{enumerate}
	\end{proposition}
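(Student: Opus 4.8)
The plan is to prove the two implications separately. For $(1)\Rightarrow(2)$ countable compactness comes essentially for free from the already-established theorems, and the real content is a $G_\delta$-argument for perfect normality; for $(2)\Rightarrow(1)$ the approximating sequences furnished by perfect normality produce an obvious countable family of neighborhoods, and the one genuinely delicate point is a Dini-type uniformization which is exactly where countable compactness is spent.

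$(1)\Rightarrow(2)$. Countable compactness of $X$ is immediate from Theorem~\ref{thm:chi}. It remains to upgrade normality to perfect normality, and I would do this by showing that an arbitrary closed set $C\subseteq X$ is a $G_\delta$. Observe that $\chi_C$ is u.s.c., so the map $F(x):=[0,\chi_C(x)]$ has l.s.c.\ lower boundary and u.s.c.\ upper boundary and hence lies in $L(X)$. Fix a countable local base $\{[W_n]\}_n$ of basic $\tau_V^+$-neighborhoods at $F$ and put $G_n=\{x:(x,1)\in W_n\}$; each $G_n$ is open, and $C\subseteq G_n$ because $x\in C$ forces $1\in F(x)\subseteq W_n(x)$. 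Conversely, if $x_0\notin C$ then $(x_0,1)\notin F$, so $(X\times\R)\setminus\{(x_0,1)\}$ is an open set containing $F$; some $[W_n]$ lies inside the associated neighborhood, whence $(x_0,1)\notin W_n$ and $x_0\notin G_n$. Therefore $C=\bigcap_n G_n$ is a $G_\delta$, and together with normality this yields perfect normality.

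$(2)\Rightarrow(1)$. Fix $F\in L(X)$ and write $F(x)=[a(x),b(x)]$ with $a$ l.s.c.\ and $b$ u.s.c. Countable compactness forces $a,b$ to be bounded (a u.s.c.\ function on a countably compact space is bounded above, and dually for l.s.c.\ functions), and perfect normality together with Proposition~\ref{prop:perf} yields $a_j,b_j\in C(X)$ with $a_j\nearrow a$ and $b_j\searrow b$. I would then set \[W_n=\{(x,t):a_n(x)-\tfrac1n<t<b_n(x)+\tfrac1n\},\] which is open and contains $F$, so that $\{[W_n]\}_n$ is a countable family of neighborhoods, and claim it is a local base. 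Given any open $W\supseteq F$, Lemma~\ref{lem:connected} lets me assume $W(x)=(\ell(x),u(x))$ with $\ell<a\le b<u$ pointwise; here $u$ is l.s.c.\ and $\ell$ is u.s.c. It then suffices to find a single $n$ with $u(x)-b_n(x)>1/n$ and $a_n(x)-\ell(x)>1/n$ for all $x$, for then $W_n\subseteq W$ and $[W_n]\subseteq[W]$.

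The main obstacle is precisely this uniform inequality, and it is where countable compactness must do its work. Writing $\phi_n=u-b_n$, the functions $\phi_n$ are l.s.c., increase to $\phi:=u-b$, and $\phi>0$ everywhere, so the statement I need is the Dini-type fact that on a countably compact space an increasing sequence of l.s.c.\ functions with an everywhere-positive pointwise limit eventually satisfies $\phi_n>1/n$ uniformly. I would establish it by contradiction: if infinitely many $n$ admit $x_n$ with $\phi_n(x_n)\le 1/n$, then $(x_n)$ has a cluster point $x^\ast$ by countable compactness; choosing $m$ with $\phi_m(x^\ast)>\tfrac12\phi(x^\ast)=:\delta$ and using lower semicontinuity to get $\phi_m>\delta$ on a neighborhood $U$ of $x^\ast$, one finds cofinally some $n\ge m$ with $1/n<\delta$ and $x_n\in U$, giving $\phi_n(x_n)\ge\phi_m(x_n)>\delta>1/n$, a contradiction. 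The symmetric argument applied to $a_n-\ell$ handles the second inequality, and taking $n$ large enough for both simultaneously shows that $\{[W_n]\}_n$ is a countable local base, so $(L(X),\tau_V^+)$ is first countable.
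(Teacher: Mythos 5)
Your direction $(2)\Rightarrow(1)$ is correct. It follows the same skeleton as the paper's proof (continuous monotone approximations $a_n\nearrow\inf F$, $b_n\searrow\sup F$ from Proposition~\ref{prop:perf}, then countable compactness to show the resulting countable family is a local base), but the key step is executed differently: the paper interpolates closed sets $F\subseteq V_n\subseteq\cl{V}_n\subseteq W_n$ using normality of $X\times\R$ and gets a contradiction from a cluster point of points $(x_n,y_n)\in V_n\setminus W$, whereas you build the $1/n$-margins into the $W_n$ and spend countable compactness on a Dini-type lemma for increasing sequences of l.s.c.\ functions on $X$ itself. Your variant is self-contained in that it avoids quoting normality of the product $X\times\R$; it is fine as written (the boundedness of $a,b$ you mention is never actually used, and $\ell,u$ may be extended-real-valued, which your cluster-point argument tolerates).

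The direction $(1)\Rightarrow(2)$ has a genuine gap at the step ``some $[W_n]$ lies inside the associated neighborhood, whence $(x_0,1)\notin W_n$''. In $(L(X),\tau_V^+)$ the basic sets are traces $W^+\cap L(X)$, so $[W_n]\subseteq V^+$ says only that every \emph{cusco map} contained in $W_n$ is contained in $V$; it does not imply $W_n\subseteq V$. Concretely, take $X=[0,1]$, let $W'$ be open with $F\subseteq W'$ and let $B$ be an open box around $(x_0,1)$ disjoint from $W'$ (both exist since $F$ is closed and $(x_0,1)\notin F$). The graph of a cusco map on a connected space is connected, so every cusco contained in $W'\cup B$ lies entirely in $W'$ or entirely in $B$; the latter is impossible because a cusco must project onto all of $X$. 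Hence $[W'\cup B]=[W']$, which can lie inside $V^+$ for $V=(X\times\R)\setminus\{(x_0,1)\}$ even though $(x_0,1)\in W'\cup B$. The underlying defect is that your $G_n$ depends on the open set $W_n$ \emph{representing} the neighborhood, not on the neighborhood $[W_n]\subseteq L(X)$ itself: starting from a local base whose closures intersect to $F$ and enlarging each member by such a separated box produces a countable local base at $F$ all of whose representing open sets contain $(x_0,1)$, and for that base $\bigcap_n G_n\neq C$. So the claim is false for an arbitrary local base, not merely unproved.

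The repair is short and keeps your approach: put $G_n=\{x\in X;\ \{x\}\times[0,1]\subseteq W_n\}$, which is open by compactness of $[0,1]$ (tube lemma) and contains $C$. If $x_0\in\bigcap_n G_n\setminus C$, then $G=F\cup\left(\{x_0\}\times[0,1]\right)$ is a cusco map: upper semicontinuity at $x_0$ holds because $F(x_0)=\{0\}$ and any open set containing $[0,1]$ contains $F(x_0)$, and at $x\neq x_0$ one may shrink neighborhoods to avoid $x_0$. Then $G\subseteq W_n$ for every $n$, so choosing $n$ with $[W_n]\subseteq V^+$ forces $(x_0,1)\notin G$, a contradiction; thus $C=\bigcap_n G_n$ is a $G_\delta$ and, with normality, $X$ is perfectly normal. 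With this fix your route (closed sets are $G_\delta$, via the cusco $F(x)=[0,\chi_C(x)]$) is a genuinely different and arguably more geometric argument than the paper's, which instead verifies the criterion of Proposition~\ref{prop:perf} directly by manufacturing a decreasing sequence of continuous functions from local bases at the maps $F_n(x)=[-n,\max\{f(x),-n\}]$ together with Proposition~\ref{prop:norm}.
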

	\begin{proof}
		We start with $(2)\Rightarrow (1)$. This is actually stated in \cite[Theorem 4.7]{Hola2008} without proof so for reader's convenience we provide one. Fix an arbitrary $F\in L(X)$ and put $f(x)=\inf F(x)$ and $g(x)=\sup F(x)$. Observe that $f$ is l.s.c. and $g$ is u.s.c. Since $X$ is perfectly normal, there are, by virtue of Proposition \ref{prop:perf}, continuous functions $f_n\!\nearrow f$ and $g_n\!\searrow g$. Define open sets $W_n\subseteq X\times\R$ by $W_n(x)=(f_n(x),g_n(x))$ and we have that $F=\bigcap_{n\in\omega_0}W_n$. Since $X\times\R$ is normal, for every $n\in\omega_0$ there is an open $V_n$ such that $F\subseteq V_n\subseteq\cl{V}_n\subseteq W_n$. We will prove that $\{V_n;n\in\omega_0\}$ is a local base at $F$ by showing that for every open $W\supseteq F$ there is $n\in\omega_0$ such that $V_n\subseteq W$. Suppose it is not, then there is an open $W\supseteq F$ such that for every $n\in\omega_0$ there is $(x_n,y_n)\in V_n\setminus W$. Since $X$ is countably compact, there are $\alpha_n,\beta_n\in\R$ such that $\cl{V}_n\subseteq X\times[\alpha_n,\beta_n]$ i.e. $\cl{V}_n\setminus W$ is countably compact and therefore $(x_n,y_n)$ has a cluster point $(x,y)\in \bigcap_{n\in\omega_0}\cl{V}_n\setminus W=\emptyset$, contrary to supposition.
		
		Now we prove $(1)\Rightarrow (2)$. From Theorem \ref{thm:chi} we have that $X$ is countably compact. To prove that it is also perfectly normal we will use Proposition \ref{prop:perf}. Take an arbitrary u.s.c. function $f:X\to R$. For every $n\in\omega_0$ put $f_n(x)=\max\{f(x),-n\}$ and $F_n(x)=[-n,f_n(x)]$. Observe that $F_n\in L(X)$, $f_n$ is u.s.c and $f_n\to f$ as $n\to\infty$. There is $\{W_{n,m}^+;m\in\omega_0\}$ a local base at $F_n$. Put $g_{n,m}(x)=\sup W_{n,m}(x)$. Since $g_{n,m}$ is l.s.c. there is by Proposition \ref{prop:norm} a function $f_{n,m}\in C(X)$ such that $f_n\le f_{n,m}\le g_{n,m}$. It is easy to see that $f_{n,m}\to f_n$ as $m\to\infty$. Put $g_n=\min\{f_{p,q};p,q=1..n\}$. One can verify that $g_n\searrow f$ as $n\to\infty$.
	\end{proof}
	\begin{corollary}\label{cor:firstcount}
		Let $X$ be a countably compact, perfectly normal space. Then $(L_0(X),\tau_V^+)$ and $(MC(X),\tau_V^+)$ are first countable.
	\end{corollary}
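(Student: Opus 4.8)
The plan is to derive this as an immediate consequence of the preceding Proposition together with the hereditary nature of first countability. First I would observe that a perfectly normal space is in particular normal, so that $X$ satisfies all the hypotheses of the Proposition: it is a normal space which is both countably compact and perfectly normal. Condition $(2)$ of the Proposition therefore holds, and invoking the implication $(2)\Rightarrow(1)$ I conclude that $(L(X),\tau_V^+)$ is first countable.

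Next I would recall the inclusions $L(X)\supseteq L_0(X)\supseteq MC(X)$ noted in the Preliminaries, and emphasize that in each case the topology on the smaller space is exactly the subspace topology induced by $\tau_V^+$ on $L(X)$ (this is the convention fixed in the Remark of Section~2, where $W^+$ on $L_0(X)$ or $MC(X)$ abbreviates $W^+\cap L_0(X)$, resp. $W^+\cap MC(X)$). Since first countability is hereditary --- every subspace of a first countable space is first countable --- both $(L_0(X),\tau_V^+)$ and $(MC(X),\tau_V^+)$ inherit a countable local base at each of their points from the one in $L(X)$.

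I do not expect a genuine obstacle here: the only points that need care are verifying that ``perfectly normal'' supplies the normality hypothesis of the Proposition, and confirming that $L_0(X)$ and $MC(X)$ really carry the subspace topology rather than some coarser trace. Once these two routine observations are in place, the result follows formally. If one preferred an explicit local base instead of an appeal to heredity, one could simply intersect the countable base $\{V_n;n\in\omega_0\}$ constructed in the proof of the Proposition with $MC(X)$ (resp.\ $L_0(X)$); but the hereditary argument already makes this unnecessary.
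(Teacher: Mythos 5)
Your proposal is correct and is precisely the argument the paper intends: the corollary is stated without proof as an immediate consequence of the preceding Proposition (implication $(2)\Rightarrow(1)$, using that perfectly normal implies normal) together with the fact that $L_0(X)$ and $MC(X)$ carry the subspace topology inherited from $(L(X),\tau_V^+)$ and first countability is hereditary. Nothing is missing; your two ``points of care'' are exactly the right ones and both check out.
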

	\begin{theorem}\label{thm:spread}
		Let $X$ be a Tychonoff space. The following are equivalent:
		\begin{enumerate}
			\item $(MC(X),\tau_V^+)$ is second countable;
			\item $(MC(X),\tau_V^+)$ has a countable network;
			\item $(MC(X),\tau_V^+)$ has a countable spread;
			\item $(MC(X),\tau_V^+)$ is Lindelöff;
			\item $(MC(X),\tau_V^+)$ has a countable extent;
			\item $X$ is compact and metrizable.
		\end{enumerate}
	\end{theorem}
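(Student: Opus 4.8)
The plan is to run the same cardinal–invariant bookkeeping as in the proof of Theorem \ref{thm:CXsecond}. Writing $Y=(MC(X),\tau_V^+)$, conditions (1)--(5) say respectively that the weight, network weight, spread, Lindelöf number and extent of $Y$ are countable, and these invariants satisfy
\[
\text{extent}\le\text{Lindel\"of number}\le\text{weight},\qquad \text{extent}\le\text{spread}\le\text{network weight}\le\text{weight}.
\]
Thus the weight (condition (1)) dominates all the others, while the extent (condition (5)) is dominated by all of them. Consequently it suffices to prove the two extreme implications (6)$\Rightarrow$(1) and (5)$\Rightarrow$(6): the first forces the weight to be countable, so (2),(3),(4) follow at once, and any of (2)--(5) being countable forces the extent to be countable, whence (6) via the second implication and then (1) again.

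For (6)$\Rightarrow$(1) I would show that $\tau_V^+$ is already second countable on the hyperspace of compact sets containing $MC(X)$. If $X$ is compact and metrizable then $X\times\R$ is locally compact and second countable, hence has a countable base $\mathcal B$ of open sets with compact closures. Every $F\in MC(X)$ is usco on the compact space $X$, so its image $F(X)$ is compact and its graph is a compact subset of $X\times\R$. Given an open $V\supseteq F$, compactness of $F$ lets me cover it by finitely many members of $\mathcal B$ lying inside $V$, so $F\in W^+\subseteq V^+$ with $W$ a finite union of basic sets. As there are only countably many such finite unions $W$, the sets $W^+$ form a countable base of $(MC(X),\tau_V^+)$.

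The implication (5)$\Rightarrow$(6) is the heart of the matter, and I would prove its contrapositive: if $X$ is not compact and metrizable, then $(MC(X),\tau_V^+)$ has uncountable extent. By Theorem \ref{thm:CXsecond} the subspace $(C(X),\tau_V)$ then has uncountable extent, and since $\tau_V=\tau_V^+$ on $C(X)$ by Theorem \ref{thm:VplusV}, I obtain an uncountable family $D\subseteq C(X)$ that is closed and discrete in $(C(X),\tau_V^+)$. Because the topology that $(MC(X),\tau_V^+)$ induces on $C(X)$ is exactly $\tau_V^+|_{C(X)}$, the family $D$ is automatically discrete as a subset of $MC(X)$, so discreteness comes for free.

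The hard part will be \emph{closedness} of $D$ inside $MC(X)$: a priori $D$ could accumulate, in $\tau_V^+$, to a genuinely set-valued $F\in MC(X)\setminus C(X)$, and one of the examples above shows that $\tau_V^+$-limits of continuous functions in $MC(X)$ really do occur. I would therefore not use an arbitrary witness but construct $D$ so as to preclude such limit points, splitting into the case where $X$ fails to be compact and the case where $X$ is compact but not metrizable. Concretely, I would take the members of $D$ to be uniformly separated and supported on a point-finite family of open sets lying over a closed discrete (resp.\ non-separated) part of $X$, so that the functions in $D$ ``escape'' every fixed open tube $\{(x,y):d(y,F(x))<\delta\}$ around a prospective limit $F$; once the fat part of such an $F$ is controlled, a tube of small width can trap only boundedly many uniformly separated functions, so no $F\in MC(X)$ lies in the closure of $D$. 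Verifying that this escaping/separation construction leaves the family with no minimal cusco limit point is the step I expect to require the most care.
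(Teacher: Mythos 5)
Your reduction of the theorem to the two implications (6)$\Rightarrow$(1) and (5)$\Rightarrow$(6), via the standard inequalities among weight, network weight, spread, Lindel\"of number and extent, is exactly the paper's strategy, and your proof of (6)$\Rightarrow$(1) is correct: it is an inline proof of the fact (which the paper simply cites from Michael) that $(K(X\times\R),\tau_V^+)$ is second countable when $X\times\R$ is, combined with the observation that for compact $X$ every minimal cusco map has compact graph, so $MC(X)\subseteq K(X\times\R)$.

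The genuine gap is that you never actually prove (5)$\Rightarrow$(6), which is the heart of the theorem. The paper disposes of it in one line: countable extent of $(MC(X),\tau_V^+)$ is asserted to pass to the subspace $(C(X),\tau_V^+)$, and then Theorem \ref{thm:CXsecond} finishes. Your objection to that passage is legitimate and even sharper than the published argument: extent is not hereditary to non-closed subspaces, and $C(X)$ is in general \emph{not} closed in $(MC(X),\tau_V^+)$ --- the paper's own example with $X=\R\setminus\{1/n;\ n\in\omega_0\setminus\{0\}\}$ exhibits a uniformly $1$-separated sequence of continuous functions $\tau_V^+$-converging to a set-valued $F\in MC(X)$, so even a uniformly separated closed discrete family in $C(X)$ can fail to be closed in $MC(X)$. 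But having raised the difficulty, you do not resolve it: your proposed family $D$ (``uniformly separated and supported on a point-finite family of open sets lying over a closed discrete (resp.\ non-separated) part of $X$'') is described only qualitatively, the case analysis over the ways $X$ can fail to be compact metrizable (not countably compact; countably compact but not compact; compact but not metrizable) is not carried out, and the one verification you yourself identify as the crux --- that no $F\in MC(X)$ is a $\tau_V^+$-limit point of $D$ --- is precisely what is left undone. Note also that the difficulty is confined to conditions (4) and (5): spread, unlike extent, \emph{is} hereditary, so countable spread of $(MC(X),\tau_V^+)$ does pass to $(C(X),\tau_V^+)$ and (3)$\Rightarrow$(6) follows cleanly from Theorem \ref{thm:CXsecond}; a complete argument therefore only needs to handle the extent hypothesis, but as submitted your proposal asserts rather than proves that step.
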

	\begin{proof}
		$(1)\Rightarrow(2)$, $(2)\Rightarrow(3)$, $(2)\Rightarrow(4)$, $(3)\Rightarrow(5)$ and $(4)\Rightarrow(5)$ are clear. To prove $(5)\Rightarrow(6)$ suppose $(MC(X),\tau_V^+)$ has a countable extent. Then $(C(X),\tau_V^+)$ has a countable extent and by Theorem \ref{thm:CXsecond} space $X$ must be compact and metrizable.  Finally to prove $(6)\Rightarrow(1)$ assume that $X$ is compact and metrizable. Then $X$ has a countable base. It is easy to verify, that $MC(X)\subseteq K(X\times\R)$, where $K(X\times\R)$ is the space of all non-empty compact subsets of $X\times\R$. Since $X\times\R$ has a countable base, we know that $(K(X\times\R),\tau_V^+)$ has also a countable base, see \cite{Michael1951}.
	\end{proof}
	Using Corollary \ref{cor:dense} we give a simpler proof to a slight generalization of \cite[Proposition 5.5]{Hola2008}
	\begin{theorem}\label{thm:ccc}
		Let $X$ be a normal space. If any of the spaces $(L(X),\tau_V)$, $(L(X),\tau_V^+)$, $(L_0(X),\tau_V)$, $(L_0(X),\tau_V^+)$, $(MC(X),\tau_V)$, $(MC(X),\tau_V^+)$ satisfies countable chain condition then $X$ is compact and metrizable.
	\end{theorem}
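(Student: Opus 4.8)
The plan is to reduce all six cases to the single statement that $(C(X),\tau_V)$ satisfies the countable chain condition (ccc), and then to invoke Theorem \ref{thm:CXsecond}. Two standard facts about cellularity $c(\cdot)$ drive the argument. First, since $\tau_V^+\subseteq\tau_V$ on each of the spaces under consideration, every pairwise disjoint family of nonempty $\tau_V^+$-open sets is also such a family of $\tau_V$-open sets; hence ccc in the finer topology $\tau_V$ forces ccc in the coarser topology $\tau_V^+$. This lets me discard the three $\tau_V$ cases, keeping only $(L(X),\tau_V^+)$, $(L_0(X),\tau_V^+)$ and $(MC(X),\tau_V^+)$. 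This detour is genuinely needed for $(L(X),\tau_V)$, since Corollary \ref{cor:denseV} establishes density of $C(X)$ only in the $\tau_V$-topologies of $L_0(X)$ and $MC(X)$, not of $L(X)$.

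Second, cellularity is invariant under passage to a dense subspace: if $D$ is dense in $Y$ then $c(D)=c(Y)$. Indeed, intersecting a pairwise disjoint open family in $Y$ with $D$ yields a pairwise disjoint open family of the same cardinality in $D$, nonempty by density, so $c(Y)\le c(D)$. Conversely, lifting a pairwise disjoint open family $\{V_\alpha\}$ in $D$ to open sets $W_\alpha\subseteq Y$ with $W_\alpha\cap D=V_\alpha$, the identity $W_\alpha\cap W_\beta\cap D=V_\alpha\cap V_\beta=\emptyset$ together with density of $D$ forces $W_\alpha\cap W_\beta=\emptyset$, giving $c(D)\le c(Y)$. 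In particular, ccc transfers from a space to every dense subspace.

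With these in hand the argument is short. Suppose one of the six listed spaces is ccc. By the first observation I may assume the topology is $\tau_V^+$, so that $(L(X),\tau_V^+)$, $(L_0(X),\tau_V^+)$ or $(MC(X),\tau_V^+)$ is ccc. By Corollary \ref{cor:dense}, $C(X)$ is dense in each of these; hence by the second observation $(C(X),\tau_V^+)$ is ccc. By Theorem \ref{thm:VplusV} the topologies $\tau_V^+$ and $\tau_V$ coincide on $C(X)$, so $(C(X),\tau_V)$ is ccc, and the equivalence of \eqref{eq:cell} and \eqref{eq:XX} in Theorem \ref{thm:CXsecond} yields that $X$ is compact and metrizable. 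No step presents a real obstacle; the only point requiring care is to avoid appealing to density of $C(X)$ in $(L(X),\tau_V)$, which is not available, and instead to route the $\tau_V$ cases through $\tau_V^+$ via the coarsening observation.
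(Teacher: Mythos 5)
Your proof is correct and follows essentially the same route as the paper's: reduce everything to the countable chain condition of $(C(X),\tau_V)$ using density of $C(X)$, the fact that ccc passes from a finer topology to a coarser one, and the coincidence $\tau_V=\tau_V^+$ on $C(X)$, then apply Theorem \ref{thm:CXsecond}. The only cosmetic difference is that you route all three $\tau_V$ cases through $\tau_V^+$ and thus need only Corollary \ref{cor:dense}, whereas the paper coarsens only in the genuinely problematic case $(L(X),\tau_V)$ and handles $(L_0(X),\tau_V)$ and $(MC(X),\tau_V)$ directly via Corollary \ref{cor:denseV}.
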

	\begin{proof}
		First note that if $(L(X),\tau_V)$ satisfies ccc then also $(L(X),\tau_V^+)$ satisfies ccc. Since $C(X)$ is dense in all of the mentioned spaces (except for $(L(X),\tau_V$) then also $(C(X),\tau_V)$ satisfies ccc. From Theorem \ref{thm:CXsecond} we have that $X$ must be compact and metrizable.
	\end{proof}
	The following theorem extends the results of Theorem \ref{thm:spread} in the case of normal $X$.
	\begin{theorem}
		Let $X$ be a normal space. The following are equivalent:
		\begin{enumerate}
			\item $(MC(X),\tau_V^+)$ has a countable $\pi-$base;
			\item $(MC(X),\tau_V^+)$ is separable;
			\item $(MC(X),\tau_V^+)$ satisfies a countable chain condition;
			\item $X$ is compact and metrizable.
		\end{enumerate}
	\end{theorem}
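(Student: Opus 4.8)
The plan is to establish all four conditions as a single cycle of implications $(1)\Rightarrow(2)\Rightarrow(3)\Rightarrow(4)\Rightarrow(1)$, exploiting the fact that the genuinely substantive work has already been carried out in Theorems \ref{thm:ccc} and \ref{thm:spread}. Once those are in hand, three of the four arrows are either standard inequalities between cardinal invariants or direct citations.

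First I would note that $(1)\Rightarrow(2)$ and $(2)\Rightarrow(3)$ hold in any topological space. Indeed, if $\{B_n;n\in\omega_0\}$ is a countable $\pi$-base, then choosing a single point from each $B_n$ yields a countable dense set: every nonempty open set contains some $B_n$, hence contains the selected point. This gives separability. For $(2)\Rightarrow(3)$, given a countable dense set $D$ and a pairwise disjoint family $\{U_\alpha\}$ of nonempty open sets, each $U_\alpha$ meets $D$ in a point not shared with any $U_\beta$ for $\beta\neq\alpha$, so the family is countable; thus separability forces the countable chain condition. These are precisely the inequalities $c\le d\le\pi w$ among cardinal functions.

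The implication $(3)\Rightarrow(4)$ is exactly the content of Theorem \ref{thm:ccc}: for a normal space $X$, if $(MC(X),\tau_V^+)$ satisfies the countable chain condition, then $X$ is compact and metrizable. This is the step that carries all the weight, but since it is already available I would simply invoke it here rather than reprove it.

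Finally, to close the loop with $(4)\Rightarrow(1)$, I would appeal to Theorem \ref{thm:spread}, which guarantees that when $X$ is compact and metrizable the space $(MC(X),\tau_V^+)$ is second countable; a countable base is in particular a countable $\pi$-base, giving $(1)$. The only genuine obstacle—extracting compactness and metrizability of $X$ from a purely internal cardinal-invariant hypothesis on the hyperspace—is the one deliberately deferred to Theorem \ref{thm:ccc}, and normality is needed nowhere in the present argument except inside that citation. The remaining reasoning is thus essentially bookkeeping that assembles the earlier results into the stated equivalence.
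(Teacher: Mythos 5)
Your proof is correct and follows exactly the same route as the paper: the implications $(1)\Rightarrow(2)\Rightarrow(3)$ are the standard cardinal-invariant inequalities, $(3)\Rightarrow(4)$ is cited from Theorem \ref{thm:ccc}, and $(4)\Rightarrow(1)$ from the second countability in Theorem \ref{thm:spread}. The only difference is that you spell out the two ``clear'' implications, which the paper leaves implicit.
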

	\begin{proof}
		$(1)\Rightarrow(2)$ and $(2)\Rightarrow(3)$ are clear. $(3)\Rightarrow(4)$ follows from Theorem \ref{thm:ccc} and $(4)\Rightarrow(1)$ follows from Theorem \ref{thm:spread}.
	\end{proof}
	For $f,g:X\to\R$, $f<g$ denote \[M_{f,g}=\{(x,y)\in X\times R; f(x)<y<g(x)\}.\]
	\begin{proposition}[{\cite[Proposition 4.4]{Hola2008}}]\label{prop:base}
		Let $X$ be a countably paracompact normal space and $D$ be a dense subset of $(C(X),\tau_V^+)$. Then
		\[\B=\{M_{f,g}^+;f,g\in D, f<g\}\]
		is a base of $(L(X),\tau_V^+)$.
	\end{proposition}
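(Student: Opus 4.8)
The plan is to verify the two defining properties of a base. That each member of $\B$ is $\tau_V^+$-open is immediate: for $f,g\in D\subseteq C(X)$ with $f<g$, the set $M_{f,g}=\{(x,y):f(x)<y<g(x)\}$ is open in $X\times\R$, so $M_{f,g}^+$ is a basic upper-Vietoris-open set. The real content is the refinement property, and since the sets $W^+$ form a base of $\tau_V^+$ it suffices to show: given $F\in L(X)$ and an open $W\subseteq X\times\R$ with $F\subseteq W$, there are $f,g\in D$ with $f<g$ and $F\subseteq M_{f,g}\subseteq W$. First I would invoke Lemma \ref{lem:connected} to shrink $W$ to an open set with connected fibres, so I may assume $W(x)=(a(x),b(x))$ is an open interval (with $-\infty\le a(x)<b(x)\le\infty$) for every $x$. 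Writing $F(x)=[F_1(x),F_2(x)]$, the envelopes $a=\inf W$ and $F_2=\sup F$ are u.s.c., while $b=\sup W$ and $F_1=\inf F$ are l.s.c., and $a(x)<F_1(x)\le F_2(x)<b(x)$ because the compact set $F(x)$ lies inside the open interval $W(x)$.

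The central step is to bracket $F$ by \emph{continuous} functions inside $W$. Exactly as in the proof of Theorem \ref{thm:upper} I would conjugate by the homeomorphism $\phi(x,t)=(x,\tan t)$; with $F_0=\phi^{-1}(F)$ and $W_0=\phi^{-1}(W)$, the envelopes $f_1=\inf W_0$, $f_2=\sup F_0$ are u.s.c. and $h_1=\inf F_0$, $h_2=\sup W_0$ are l.s.c., all now real-valued and bounded, with $f_1<h_1\le f_2<h_2$. Where Theorem \ref{thm:upper} needed only the non-strict insertion of Proposition \ref{prop:norm}, here I need a \emph{strict} insertion on each side, and this is precisely where countable paracompactness enters: by the classical insertion theorem of Dowker (which characterizes normal countably paracompact spaces) there are continuous $f_0,g_0$ with $f_1<f_0<h_1$ and $f_2<g_0<h_2$. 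These take values in $(-\pi/2,\pi/2)$, so $f=\tan\circ f_0$ and $g=\tan\circ g_0$ are continuous, and pushing the inequalities through $\phi$ yields $a<f<F_1\le F_2<g<b$; in particular $f<g$ and $F\subseteq M_{f,g}\subseteq W$. This strict two-sided sandwiching is the main obstacle, and the reason the hypothesis must be strengthened from ``normal'' (as in Theorem \ref{thm:upper}) to ``countably paracompact normal''.

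It remains to replace $f,g$ by members of $D$. I would set $R_f=W\cap\{(x,y):y<F_1(x)\}$ and $R_g=W\cap\{(x,y):y>F_2(x)\}$; since $F_1$ is l.s.c. and $F_2$ is u.s.c., both are open in $X\times\R$, and by the previous step they contain the graphs of $f$ and $g$ respectively. Hence $R_f^+\cap C(X)$ and $R_g^+\cap C(X)$ are nonempty $\tau_V^+$-open subsets of $C(X)$, so density of $D$ lets me pick $f'\in D\cap R_f^+$ and $g'\in D\cap R_g^+$. Unwinding the definitions gives $f'(x)<F_1(x)$ with $(x,f'(x))\in W$, and symmetrically $g'(x)>F_2(x)$ with $(x,g'(x))\in W$. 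Consequently $f'(x)<F_1(x)\le F_2(x)<g'(x)$, so $f'<g'$ and $F(x)=[F_1(x),F_2(x)]\subseteq(f'(x),g'(x))$, i.e. $F\subseteq M_{f',g'}$; moreover $f'(x),g'(x)\in W(x)=(a(x),b(x))$ and $W(x)$ is an interval, so $(f'(x),g'(x))\subseteq W(x)$, i.e. $M_{f',g'}\subseteq W$. Thus $F\in M_{f',g'}^+\subseteq W^+$ with $f',g'\in D$ and $f'<g'$, completing the proof that $\B$ is a base of $(L(X),\tau_V^+)$.
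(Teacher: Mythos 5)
Your proof is correct, but it cannot be ``essentially the same as the paper's'': the paper offers no proof of Proposition \ref{prop:base} at all, importing it verbatim from \cite[Proposition 4.4]{Hola2008}. Measured against the surrounding text, your argument is self-contained and reuses the paper's own toolkit: Lemma \ref{lem:connected} to reduce to a $W$ with open-interval fibres, the $\tan$-conjugation from the proof of Theorem \ref{thm:upper} to make all four envelopes real-valued, and a density argument in $(C(X),\tau_V^+)$ to replace the inserted continuous functions by members of $D$ --- your observation that $R_f$ and $R_g$ are open because $\inf F$ is l.s.c.\ and $\sup F$ is u.s.c., and that interval fibres of $W$ make $M_{f',g'}\subseteq W$ automatic, closes this last step cleanly. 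The one genuinely external ingredient is Dowker's strict insertion theorem, and your use of it is exactly right: the paper's Proposition \ref{prop:norm} (non-strict insertion, requiring only normality) cannot yield the two-sided strict sandwich $f<\inf F\le \sup F<g$ that $F\subseteq M_{f,g}$ demands, and the midpoint trick in Theorem \ref{thm:upper} produces only a single continuous function strictly inside $W$, not a bracket of $F$. Since Dowker's theorem characterizes countably paracompact normal spaces among normal ones, your remark that countable paracompactness enters precisely at this step --- and is what forces the stronger hypothesis relative to Theorem \ref{thm:upper} --- is accurate as well.
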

	\begin{lemma}\label{lem:reg}
		Let $X$ be a space and $f,g\in C(X)$, such that $f<g$. Then in $(MC(X),\tau_V^+)$ we have that $\cll(M_{f,g}^+)\subseteq \cl M_{f,g}^+$.
	\end{lemma}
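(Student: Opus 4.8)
The plan is to prove the contrapositive. First I would record that, since $f,g\in C(X)$ with $f<g$, one has $\cl M_{f,g}=\{(x,y)\in X\times\R;\ f(x)\le y\le g(x)\}$: the inclusion $\subseteq$ is immediate because the right-hand set is closed and contains $M_{f,g}$, while $\supseteq$ follows by noting that for $f(x_0)\le y_0\le g(x_0)$ the nonempty open interval $(f(x_0),g(x_0))$ has $y_0$ in its closure, so every basic neighbourhood of $(x_0,y_0)$ meets $M_{f,g}$. Consequently, if $H\in MC(X)$ and $H\notin \cl M_{f,g}^+$, there is a point $(x_0,y_0)\in H$ with either $y_0>g(x_0)$ or $y_0<f(x_0)$; I treat the first case, the second being symmetric. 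The goal is to exhibit an open $W\subseteq X\times\R$ with $H\subseteq W$ and $W^+\cap M_{f,g}^+=\emptyset$, which witnesses $H\notin\cll(M_{f,g}^+)$.

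The main obstacle is that $\tau_V^+$ is a purely upper topology, so a single ``bad'' point of $H$ cannot be detected directly; I must upgrade it to a statement about a whole vertical slice of $H$ over an open subset of $X$. For this I would invoke Theorem \ref{thm:characterization} to fix a quasicontinuous subcontinuous selection $h$ of $H$ with $\co[\cl h(x)]=H(x)$ for all $x$. Since $y_0\in H(x_0)=[\min\cl h(x_0),\max\cl h(x_0)]$ and $\cl h(x_0)$ is compact, we get a cluster value $t\in\cl h(x_0)$ with $t\ge y_0>g(x_0)$; by continuity of $g$ this produces a point $x'$ with $h(x')>g(x')$. Setting $c=\tfrac12\big(h(x')+g(x')\big)$, continuity of $g$ gives a neighbourhood of $x'$ on which $g<c$, and quasicontinuity of $h$ at $x'$ (applied to the open neighbourhood $(c,\infty)$ of $h(x')$) yields a nonempty open set $G\subseteq X$ with
\[
h(x)>c>g(x)\qquad\text{for every }x\in G.
\]

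The key upgrade is now purely topological: because $G$ is open and $h>c$ throughout $G$, any net $x_\nu\to x$ with $x\in G$ eventually lies in $G$, so every cluster value of $h$ at $x$ is $\ge c$; hence $\cl h(x)\subseteq[c,\infty)$ and therefore $H(x)=\co[\cl h(x)]\subseteq[c,\infty)$ for all $x\in G$. In particular $H(x)\cap(f(x),g(x))=\emptyset$ on $G$, i.e. the entire slice of $H$ lies strictly above $g$ there. This is the step I expect to be the crux, since it is exactly what converts the one-point defect into an obstruction visible to $\tau_V^+$.

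Finally I would fix any $x_1\in G$ and choose a bounded open $V_1\supseteq H(x_1)$ with $\cl V_1\subseteq(g(x_1),\infty)$, which is possible because $\min H(x_1)\ge c>g(x_1)$. Put
\[
W=(X\times\R)\setminus\big(\{x_1\}\times(\R\setminus V_1)\big),
\]
which is open (as $X$ is Hausdorff, $\{x_1\}\times(\R\setminus V_1)$ is closed), satisfies $H\subseteq W$ since $H(x_1)\subseteq V_1$, and has slice $W(x_1)=V_1$. Then for any $A\in W^+\cap MC(X)$ the value $A(x_1)$ is a nonempty subset of $V_1\subseteq(g(x_1),\infty)$, hence disjoint from $(f(x_1),g(x_1))$, so $A\not\subseteq M_{f,g}$; thus $W^+\cap M_{f,g}^+=\emptyset$. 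This shows $H\notin\cll(M_{f,g}^+)$, completing the contrapositive and therefore the inclusion $\cll(M_{f,g}^+)\subseteq \cl M_{f,g}^+$.
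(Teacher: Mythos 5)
Your proof is correct, but it exploits the minimality of $H$ through a different mechanism than the paper does. The paper argues directly: it takes $F\in\cll(M_{f,g}^+)$, uses the same ``punch a hole at one point'' open set $(X\times\R)\setminus\bigl(\{x_0\}\times[f(x_0),g(x_0)]\bigr)$ that you use to show that $F(x)\cap[f(x),g(x)]\not=\emptyset$ for \emph{every} $x$, and then finishes in one stroke: $F\cap\cl{M}_{f,g}$ is a cusco map contained in $F$, so minimality forces $F=F\cap\cl{M}_{f,g}\subseteq\cl{M}_{f,g}$. You instead argue by contraposition, and since a single point $(x_0,y_0)\in H$ above $g$ does not by itself give a $\tau_V^+$-visible obstruction (the value $H(x_0)$ may still meet $[f(x_0),g(x_0)]$), you must upgrade the point defect to a whole slice; you do this by invoking Theorem \ref{thm:characterization} to get a quasicontinuous subcontinuous selection $h$ and using quasicontinuity to produce a nonempty open $G$ on which all of $H$ lies above $c>g$, after which the same one-point hole construction applies at any $x_1\in G$. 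Both proofs thus share the identical $\tau_V^+$ separation trick; the difference is that the paper's intersection-with-a-cusco argument is shorter and stays entirely at the set-valued level, while your route needs the heavier selection characterization but yields, as a by-product, the stronger local information that a minimal cusco map escaping $\cl{M}_{f,g}$ at one point must escape it uniformly on a nonempty open set --- which is a useful fact in its own right and makes explicit why minimality (rather than mere cusco-ness) is essential to the lemma.
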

	\begin{proof}
		Put $W=M_{f,g}$. Observe that $\cl W(x)=\cl{W(x)}=[f(x),g(x)]$. Choose an arbitrary $F\in\cll(W^+)$. We will prove that for every $x\in X$ holds $F(x)\cap \cl W(x)\not=\emptyset$ by contradiction. Suppose that there is $x_0\in X$ such that $F(x_0)\subseteq \cl W(x_0)^C$. Then $F\subseteq V:=(X\times\R) \setminus \left(\{x_0\}\times \cl{W}(x_0)\right)$, where $V$ is an open subset of $X\times\R$. Since $F\in V^+\cap\cll(W^+)$ then $V^+\cap W^+\not=\emptyset$, a contradiction.
		
		We have proved that $F\cap\cl W$ has nonempty values and since $F\in MC(X)$ and $\cl W$ is closed and convex valued upper semicontinuous  map, then $F\cap\cl W$ is cusco. From the minimality of $F$ we have that $F\cap\cl W=F$, thus $F\subseteq\cl W$, i.e. $F\in\cl W^+$.
	\end{proof}
	\begin{theorem}\label{thm:reg}
		Let $X$ be a countably paracompact normal space. Then $(MC(X),\tau_V^+)$ is regular.
	\end{theorem}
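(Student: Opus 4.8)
The plan is to exhibit, around each point $F\in MC(X)$ and inside each basic neighborhood of $F$, a smaller basic neighborhood whose closure is still contained in the given one; this is exactly regularity. First I would reduce to basic neighborhoods. Taking $D=C(X)$ in Proposition \ref{prop:base}, the family $\{M_{f,g}^+;\ f,g\in C(X),\ f<g\}$ is a base of $(L(X),\tau_V^+)$, and since $MC(X)\subseteq L(X)$ inherits the topology, it is also a base of $(MC(X),\tau_V^+)$. Thus it suffices to fix $F\in MC(X)$ and $f,g\in C(X)$ with $f<g$ and $F\in M_{f,g}^+$, and to produce $f',g'\in C(X)$ with $f'<g'$ such that
\[F\in M_{f',g'}^+\subseteq\cll(M_{f',g'}^+)\subseteq M_{f,g}^+.\]

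Next I would introduce the boundary functions $\alpha(x)=\inf F(x)$ and $\beta(x)=\sup F(x)$. Since $F$ is usco, these infima and suprema are attained, and a routine upper-semicontinuity argument shows that $\alpha$ is l.s.c. and $\beta$ is u.s.c.; of course $\alpha\le\beta$. The assumption $F\in M_{f,g}^+$ means $F(x)\subseteq(f(x),g(x))$ for every $x$, so in particular $f(x)<\alpha(x)$ and $\beta(x)<g(x)$ for all $x$.

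The heart of the argument is a \emph{strict} insertion, and this is where the hypothesis that $X$ be normal \emph{and countably paracompact} is used. By Dowker's strict insertion theorem, whenever a u.s.c. function lies strictly below an l.s.c. function one can insert a continuous function strictly between them. Applying this to $f<\alpha$ (with $f$ continuous, hence u.s.c., and $\alpha$ l.s.c.) yields $f'\in C(X)$ with $f<f'<\alpha$; applying it to $\beta<g$ (with $\beta$ u.s.c. and $g$ l.s.c.) yields $g'\in C(X)$ with $\beta<g'<g$. Then $f'(x)<\alpha(x)\le\beta(x)<g'(x)$, so $f'<g'$, and $M_{f',g'}^+$ is a legitimate $\tau_V^+$-open basic set.

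Finally I would verify the chain. From $f'<\alpha\le\beta<g'$ we get $F(x)\subseteq[\alpha(x),\beta(x)]\subseteq(f'(x),g'(x))$, so $F\in M_{f',g'}^+$. By Lemma \ref{lem:reg}, $\cll(M_{f',g'}^+)\subseteq\cl{M_{f',g'}}^+$, and since $\cl{M_{f',g'}}(x)=[f'(x),g'(x)]$ while $f<f'$ and $g'<g$, any $H\subseteq\cl{M_{f',g'}}$ satisfies $H\subseteq M_{f,g}$; hence $\cl{M_{f',g'}}^+\subseteq M_{f,g}^+$. Concatenating the three inclusions gives the desired nesting and establishes regularity. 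I expect the only real obstacle to be the strict insertion step: the non-strict insertion of Proposition \ref{prop:norm} would leave $[f'(x),g'(x)]$ touching the boundary graphs, which is fatal (then $F$ need not lie in the interior $M_{f',g'}^+$, nor does the closure drop strictly inside $M_{f,g}$). It is precisely countable paracompactness, via Dowker's theorem, that converts the non-strict insertion into the strict inequalities required to nest the closures.
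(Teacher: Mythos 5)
Your proof is correct, but the mechanism you use for the crucial ``strict shrinking'' step differs from the paper's. Both arguments share the same skeleton: Proposition \ref{prop:base} (with $D=C(X)$) supplies the base $\{M_{f,g}^+\}$, and Lemma \ref{lem:reg} converts the closure in $(MC(X),\tau_V^+)$ into containment in $\cl{M}_{f,g}$. The difference is where countable paracompactness enters. The paper works with an arbitrary $W^+\ni F$ and shrinks \emph{inside the product}: since $X$ is countably paracompact and normal, $X\times\R$ is normal (cited from McCoy), so there is open $W_0$ with $F\subseteq W_0\subseteq\cl{W}_0\subseteq W$; it then drops a base set $M_{f,g}^+$ into $W_0^+$ and concludes $\cl{M}_{f,g}^+\subseteq\cl{W}_0^+\subseteq W^+$. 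You instead shrink \emph{at the level of functions on $X$}: after reducing to a basic neighborhood $M_{f,g}^+$, you apply Dowker's strict insertion theorem to the semicontinuous boundary functions $\inf F$ and $\sup F$ to produce $f<f'<\inf F\le\sup F<g'<g$, and then $\cl{M}_{f',g'}^+\subseteq M_{f,g}^+$ follows by inspection of fibers. The two shrinking devices are equivalent facets of Dowker's characterization of countably paracompact normal spaces (normality of $X\times\R$ versus strict insertion), so neither route is more general; yours avoids any reference to the topology of the product and makes explicit why plain normality (the non-strict insertion of Proposition \ref{prop:norm}) would not suffice, while the paper's version is shorter because it never needs the boundary functions of $F$ or their semicontinuity (a fact you correctly assert and which the paper itself verifies in the proof of Theorem \ref{thm:upper}).
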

	\begin{proof}
		Choose an arbitrary $F\in MC(X)$ and open $W\subseteq X\times R$ such that $F\in W^+$. Since $X$ is countably paracompact and normal, then $X\times R$ is normal (see \cite{McCoy2000}) and there is an open $W_0$ such that $F\subseteq W_0\subseteq \cl{W}_0\subseteq W$. From Proposition \ref{prop:base} we have that $\B$ is a base of $\tau_V^+$ so there is $M_{f,g}$ such that $F\in M_{f,g}^+\subseteq W_0^+$ and from Lemma \ref{lem:reg} it follows that
		\[F\in M_{f,g}^+\subseteq \cll(M_{f,g}^+)\subseteq\cl{M}_{f,g}^+\subseteq\cl{W}_0^+\subseteq W^+.\]
	\end{proof}
	\begin{definition}[{a more rigorous definition can be found in \cite[Definition 8.10]{Kechris1995}}]
		Let $X$ be a topological space. A \emph{Choquet game} $G_X$ of $X$ is defined as follows. Players I and II take turns in playing open sets. Player I starts with $U_0$, player II follows with $V_0\subseteq U_0$ and then again player I plays $U_1\subseteq V_0$ and so on, creating an infinite sequence called a \emph{run of the game}. Player I wins if $\bigcap_{n\in\omega_0} U_n=\emptyset$, otherwise player II wins. A \emph{strategy} for player I is a rule that assigns exactly one $U_n$ to any $U_0,V_0,...,V_{n-1}$. A strategy is \emph{winning} if player I wins every run consistent with this strategy. A \emph{tactics} for player I is a rule that assigns exactly one $U_n$ for every $V_{n-1}$. A \emph{winning} tactics is defined analogically to a wining strategy. Also a (winning) strategy and tactics for player II are defined analogically.
		
		A space $X$ is called \emph{Choquet} if player II has a winning strategy.
	\end{definition}
	Note that every winning tactics defines a winning strategy, but the converse is not true.
	\begin{definition}[{\cite[Definition 8.14]{Kechris1995}}]
		Let $X$ be a topological space. A \emph{strong Choquet game} $G^s_X$ for $X$ is defined similarly to $G_X$, but player I plays a pair $(x_n,U_n)$ with $x_n\in U_n$ instead of just $U_n$ and player II has to choose $V_n$ so that $x_n\in V_n\subseteq U_n$.
		
		A space $X$ is called \emph{strong Choquet} if player II has a winning strategy.
	\end{definition}
	Note that every strong Choquet space is Choquet, but the converse is not true.
	\begin{theorem}\label{thm:stCh}
		Let $X$ be a countably paracompact normal space. Then $(MC(X),\tau_V^+)$ is strong Choquet.
	\end{theorem}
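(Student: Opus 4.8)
The plan is to exhibit a winning strategy for player II in the game $G^s_{MC(X)}$ under which every answer is a basic strip $M_{f_n,g_n}^+$ with $f_n,g_n\in C(X)$, $f_n<g_n$, chosen so that (i) the closures nest inside the sets played by player I, (ii) player I's current point lies in the strip, and (iii) the defining functions are \emph{strictly} monotone in $n$. Suppose that at stage $n$ player I has played $(F_n,\U_n)$ with $F_n\in\U_n$ and, for $n\ge 1$, $\U_n\subseteq\V_{n-1}=M_{f_{n-1},g_{n-1}}^+$, the latter being player II's previous answer. By Proposition \ref{prop:base} (taking $D=C(X)$) together with the regularity from Lemma \ref{lem:reg} and Theorem \ref{thm:reg}, player II can first find $a,b\in C(X)$ with $F_n\in M_{a,b}^+$ and $\cl{M}_{a,b}^+\subseteq\U_n$. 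Since $\cl{M}_{f_n,g_n}^+\subseteq\cl{M}_{a,b}^+$ whenever $a\le f_n$ and $g_n\le b$, it then suffices to build $f_n,g_n$ with $a\le f_n$, $g_n\le b$, and additionally $f_{n-1}<f_n$, $g_n<g_{n-1}$, so that the strips nest with a genuine gap.

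The key tool is the \emph{strict} insertion theorem for countably paracompact normal spaces (Dowker): if $u$ is u.s.c., $\ell$ is l.s.c. and $u<\ell$ everywhere, then there is $w\in C(X)$ with $u<w<\ell$. This strengthens Proposition \ref{prop:norm} (which only gives $\le$) and is exactly the place where countable paracompactness of $X$ enters. Write $p_n(x)=\min F_n(x)$ and $q_n(x)=\max F_n(x)$; as $F_n$ is usco with compact values, $p_n$ is l.s.c. and $q_n$ is u.s.c., and from $F_n\in M_{a,b}^+\cap M_{f_{n-1},g_{n-1}}^+$ we get $\max\{a,f_{n-1}\}<p_n$ and $q_n<\min\{b,g_{n-1}\}$. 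Applying strict insertion to the continuous (hence u.s.c.) function $\max\{a,f_{n-1}\}$ below the l.s.c. $p_n$ gives $f_n\in C(X)$ with $\max\{a,f_{n-1}\}<f_n<p_n$, and dually $g_n\in C(X)$ with $q_n<g_n<\min\{b,g_{n-1}\}$. Then $f_n<g_n$, $F_n\in M_{f_n,g_n}^+$, $\cl{M}_{f_n,g_n}^+\subseteq\cl{M}_{a,b}^+\subseteq\U_n$, and $f_{n-1}<f_n$, $g_n<g_{n-1}$; player II plays $\V_n=M_{f_n,g_n}^+$. The first move is identical, omitting the constraints coming from $f_{-1},g_{-1}$.

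To see that this strategy wins, fix a run consistent with it. The functions $f_n$ strictly increase and $g_n$ strictly decrease at every point, and $f_0\le f_n<g_n\le g_0$, so the pointwise limits $f=\sup_n f_n$ and $g=\inf_n g_n$ are finite, with $f$ l.s.c., $g$ u.s.c. and $f\le g$. The map $H$ given by $H(x)=[f(x),g(x)]$ has compact convex values and, because $f$ is l.s.c. and $g$ is u.s.c., is upper semicontinuous; hence $H\in L(X)$. Since every cusco map contains a minimal one (a routine Zorn's lemma argument on chains of cuscos ordered by reverse inclusion, whose intersections are again cusco), choose $F\in MC(X)$ with $F\subseteq H$. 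Strict monotonicity gives $f_n(x)<f(x)\le\min H(x)$ and $\max H(x)\le g(x)<g_n(x)$ for all $n,x$, so $F(x)\subseteq H(x)\subseteq(f_n(x),g_n(x))$ and thus $F\in M_{f_n,g_n}^+=\V_n$ for every $n$. Therefore $F\in\bigcap_n\V_n\subseteq\bigcap_n\U_n$, the intersection is nonempty, and player II wins.

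The main obstacle is precisely arranging the strict nesting of the strips while simultaneously keeping them inside $\U_n$ and capturing $F_n$: this is what forces the use of the strict (Dowker) insertion theorem in place of the non-strict Proposition \ref{prop:norm}, and it explains why countable paracompactness is assumed. The two supporting facts—that $x\mapsto[f(x),g(x)]$ is usco for l.s.c. $f$ and u.s.c. $g$, and that a cusco map contains a minimal cusco map—are routine but should be verified explicitly.
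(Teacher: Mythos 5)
Your proof is correct, and its skeleton is the same as the paper's: player II answers with basic strips $M_{f_n,g_n}^+$ whose closed strips sit inside player I's open sets, and in the limit the map $x\mapsto\bigcap_n[f_n(x),g_n(x)]$ is cusco, so a minimal cusco map inside it witnesses $\bigcap_n\U_n\not=\emptyset$. The one genuine divergence is how the strict nesting $f_{n-1}<f_n<g_n<g_{n-1}$ is secured. You build it by hand via Dowker's strict insertion theorem; the paper gets it for free, and so could you: after your first step you already have $a,b\in C(X)$ with $F_n\in M_{a,b}^+$ and $\cl{M}_{a,b}^+\subseteq\U_n\subseteq\V_{n-1}=M_{f_{n-1},g_{n-1}}^+$, and since $a$ and $b$ are themselves elements of $MC(X)$ whose graphs lie in $\cl{M_{a,b}}$, they belong to $\cl{M}_{a,b}^+\subseteq M_{f_{n-1},g_{n-1}}^+$, which says exactly that $f_{n-1}<a<g_{n-1}$ and $f_{n-1}<b<g_{n-1}$; hence $\cl{M_{a,b}}\subseteq M_{f_{n-1},g_{n-1}}$ and player II may simply play $\V_n=M_{a,b}^+$. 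So your Dowker step, while valid (its hypothesis is precisely the countable paracompactness plus normality assumed here), is redundant, and your closing diagnosis is slightly off: countable paracompactness is not needed for the strict insertion step but is already consumed by Proposition \ref{prop:base} and by the normality of $X\times\R$ inside Theorem \ref{thm:reg}, both of which your first step invokes as well. What the paper's shortcut buys, besides avoiding an insertion theorem not stated in the paper (Proposition \ref{prop:norm} is only the non-strict version), is that player II's rule then depends only on player I's last move $(F_n,\U_n)$: it is a winning \emph{tactics} in the paper's terminology, formally stronger than your strategy, which must remember $f_{n-1}$ and $g_{n-1}$. Your two auxiliary facts --- that $x\mapsto[f(x),g(x)]$ is cusco when $f$ is l.s.c., $g$ is u.s.c. and $f\le g$, and that every cusco map contains a minimal cusco map --- are indeed routine and are used implicitly by the paper as well.
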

	\begin{proof}
		We will present a winning tactics for player II in the strong Choquet game. Suppose that player I has chosen $F_k\in MC(X)$ and $\U_k\in\tau_V^+$ such that $F_k\in\U_k\subseteq\V_{k-1},$ where $\V_{k-1}$ is the choice of player II from the previous turn. From the proof of Theorem \ref{thm:reg} there are $f_k,g_k\in C(X)$ such that
		\[F_k\in M_{f_k,g_k}^+\subseteq \cl{M}_{f_k,g_k}^+\subseteq\U_k.\]
		Player II chooses $\V_k=M_{f_k,g_k}^+$. Now we have
		\[f_1<\ldots <f_k<\ldots<g_k<\ldots<g_1.\]
		Put $F(x):=\bigcap_{k\in\omega_0}[f_k(x),g_k(x)]\not=\emptyset$. Since $F$ is an intersection of a decreasing system of cusco maps, we have that  $F$ is cusco, therefore it contains a minimal cusco map $F^*$. Since $F^*\in\bigcap_{k\in\omega_0}\U_k$ we have described a wining tactics.
	\end{proof}
	Recall that a second countable, completely metrizable topological space is called \emph{Polish}.
	\begin{theorem}\label{thm:polish}
		Let $X$ be a Tychonoff space. Then $(MC(X),\tau_V^+)$ is Polish iff the space $X$ is compact and metrizable.
	\end{theorem}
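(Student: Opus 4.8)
The plan is to treat the two directions separately: the forward implication is essentially immediate from the second-countability characterization already proved, while the converse assembles the structural results on regularity, the Hausdorff property, and the strong Choquet game into a single metrizability-plus-completeness argument.

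For the direction that Polishness forces $X$ to be compact and metrizable, I would simply observe that a Polish space is in particular second countable. Theorem \ref{thm:spread} already records the equivalence of second countability of $(MC(X),\tau_V^+)$ with $X$ being compact and metrizable, so this half requires no further work.

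For the converse, assume $X$ is compact and metrizable. First I would note that such an $X$ is normal, countably paracompact (being metrizable it is paracompact, hence countably paracompact) and Baire (a compact Hausdorff, equivalently a complete metric, space is Baire), so that Theorems \ref{thm:reg} and \ref{thm:stCh} and Proposition \ref{prop:hausdorff} all apply. By Theorem \ref{thm:spread}, $(MC(X),\tau_V^+)$ is second countable; by Theorem \ref{thm:reg} it is regular; and by Proposition \ref{prop:hausdorff} it is Hausdorff. A regular Hausdorff second countable space is metrizable by the Urysohn metrization theorem, so $(MC(X),\tau_V^+)$ is separable metrizable. The completeness then comes from the game: by Theorem \ref{thm:stCh}, $(MC(X),\tau_V^+)$ is strong Choquet, and Choquet's characterization of complete metrizability (\cite[Theorem 8.18]{Kechris1995}) states that a nonempty separable metrizable space is Polish if and only if it is strong Choquet. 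Applying this yields that $(MC(X),\tau_V^+)$ is Polish.

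The step I expect to require the most care is not any single deduction but the verification that a compact metrizable $X$ genuinely satisfies the hypotheses used downstream, namely the \emph{countably paracompact normal} condition needed for Theorems \ref{thm:reg} and \ref{thm:stCh} and the \emph{Baire} condition needed for Proposition \ref{prop:hausdorff}. Once these are in place the argument is essentially bookkeeping, routing the previously established facts through Urysohn's theorem and then Choquet's theorem. The one subtlety worth flagging is the order of operations: Choquet's characterization presupposes a metrizable space, so I would secure metrizability via Urysohn \emph{before} invoking the strong Choquet property to conclude complete metrizability.
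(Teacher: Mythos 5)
Your proposal is correct and follows essentially the same route as the paper: both directions use Theorem \ref{thm:spread} for second countability, Theorem \ref{thm:reg} for regularity, Proposition \ref{prop:hausdorff} for the Hausdorff property, Theorem \ref{thm:stCh} for the strong Choquet property, and \cite[Theorem 8.18]{Kechris1995} to conclude. The only cosmetic difference is that you invoke Urysohn metrization explicitly before applying Choquet's theorem for separable metrizable spaces, whereas the paper cites the version of Kechris's theorem in which that step is already absorbed (second countable $+$ $T_1$ $+$ regular $+$ strong Choquet iff Polish); your careful verification that a compact metrizable $X$ is normal, countably paracompact and Baire is exactly what the paper leaves implicit.
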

	\begin{proof}
		If $(MC(X),\tau_V^+)$ is Polish, then it is second countable and from Theorem \ref{thm:spread} we have that $X$ must be compact and metrizable.
		
		By \cite[Theorem 8.18]{Kechris1995} a nonempty, second countable, topological space is Polish iff it is $T_1$, regular and strong Choquet. By Theorem \ref{thm:spread} the space $(MC(X),\tau_V^+)$ is second countable, by Theorem \ref{thm:reg} it is regular, by Theorem \ref{thm:stCh} it is strong Choquet and by Proposition \ref{prop:hausdorff} it is Hausdorff.
	\end{proof}
	\begin{lemma}\label{lem:almostbase}
		Let $\U$ be an open set in the space $(L(X),\tau_V)$ such that there is $h\in\U\cap C(X)$. Then there are $f,g\in C(X)$ such that $h\in M_{f,g}^+\subseteq\cl{M}_{f,g}^+\subseteq\U$.
	\end{lemma}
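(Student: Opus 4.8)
The plan is to cut $\U$ down to a basic $\tau_V$-neighbourhood and then trap a thin continuous band around $h$ inside it. Since $\U$ is $\tau_V$-open and contains $h$, I fix open sets $W_1,\ldots,W_n\subseteq X\times\R$ with $h\in[W_1,\ldots,W_n]\subseteq\U$; because $[W_1,\ldots,W_n]\subseteq\U$, it suffices to produce $f,g\in C(X)$ with $h\in M_{f,g}^+$ and $\cl{M}_{f,g}^+\subseteq[W_1,\ldots,W_n]$. Put $W=\bigcup_{k=1}^n W_k$, so that $h\in W^+$, and apply Lemma \ref{lem:connected} to $h\in L(X)$ and $W$ to get an open $G$ with connected fibres and $h\subseteq G\subseteq W$. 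Writing $a(x)=\inf G(x)$ and $b(x)=\sup G(x)$, each fibre $G(x)$ is the open interval $(a(x),b(x))\ni h(x)$, with $a$ upper semicontinuous and $b$ lower semicontinuous.

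Next I would manufacture continuous boundaries strictly inside $G$. Replacing $b$ by $\min(b,h+1)$ and $a$ by $\max(a,h-1)$ to make them finite (this keeps them semicontinuous and still between, since $a<h<b$), I seek $f_1,g_1\in C(X)$ with $a<f_1<h<g_1<b$. As the barrier $h$ is itself continuous, this reduces to finding strictly positive continuous minorants of the strictly positive lower semicontinuous functions $b-h$ and $h-a$, and putting $g_1=h+(\text{minorant})$, $f_1=h-(\text{minorant})$. Granting $f_1,g_1$, one checks fibrewise that $[f_1(x),g_1(x)]\subseteq(a(x),b(x))=G(x)\subseteq W(x)$, hence $\cl{M}_{f_1,g_1}\subseteq W$. \emph{This insertion is the crux.} A positive lower semicontinuous function need not dominate a positive continuous one on an arbitrary space, so the hypotheses on $X$ (normality together with countable paracompactness, in the spirit of Proposition \ref{prop:norm} but now with strict inequalities) are indispensable here. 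I expect this to be the main obstacle and the precise point at which the standing assumptions on $X$ must be invoked.

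Then I would shrink the band at finitely many points to secure the lower-Vietoris conditions. For each $k$ choose $x_k$ with $(x_k,h(x_k))\in W_k$ (possible since $h\in W_k^-$) and $\varepsilon_k>0$ with $\{x_k\}\times(h(x_k)-\varepsilon_k,h(x_k)+\varepsilon_k)\subseteq W_k$. Fixing a constant $\delta\in(0,1]$ small enough that $\delta\,(g_1(x_k)-h(x_k))<\varepsilon_k$ and $\delta\,(h(x_k)-f_1(x_k))<\varepsilon_k$ for every $k$ (finitely many conditions), I set $g=h+\delta(g_1-h)$ and $f=h-\delta(h-f_1)$. These are continuous with $f_1\le f<h<g\le g_1$, so $h\in M_{f,g}^+$ and $\cl{M}_{f,g}\subseteq\cl{M}_{f_1,g_1}\subseteq W$, while $\{x_k\}\times[f(x_k),g(x_k)]\subseteq W_k$ by the choice of $\delta$.

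It remains to verify $\cl{M}_{f,g}^+\subseteq[W_1,\ldots,W_n]$. Let $A\in\cl{M}_{f,g}^+$, i.e. $A$ is cusco with $A\subseteq\cl{M}_{f,g}\subseteq W$; thus $A\in W^+$. For each $k$, $A(x_k)\subseteq[f(x_k),g(x_k)]$ gives $\{x_k\}\times A(x_k)\subseteq W_k$, and since $A(x_k)\neq\emptyset$ we obtain $A\cap W_k\neq\emptyset$, i.e. $A\in W_k^-$. Hence $A\in[W_1,\ldots,W_n]\subseteq\U$, which completes the argument.
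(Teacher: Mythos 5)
Your argument is correct, but it takes a genuinely different route from the paper's. The paper first shrinks $\U$ to $\U'=W_0^+\cap(U_1\times J_1)^-\cap\dots\cap(U_{n'}\times J_{n'})^-$ with pairwise disjoint $U_k$, then invokes Theorem \ref{thm:VplusV} together with Proposition \ref{prop:base} to get $f',g'\in C(X)$ with $h\in M_{f',g'}^+\cap C(X)\subseteq\U'\cap C(X)$, and leaves the passage from continuous functions to arbitrary cusco maps as a closing ``one can verify''. You instead build the band by hand: Lemma \ref{lem:connected} plus strict insertion of continuous functions between semicontinuous ones gives the wide band $M_{f_1,g_1}$ inside $W=\bigcup_k W_k$, and your $\delta$-scaling pinches it at the witness points so that $\{x_k\}\times[f(x_k),g(x_k)]\subseteq W_k$. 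The step you flag as the crux is genuine but classical: inserting a continuous function strictly between an u.s.c.\ and an l.s.c.\ function (here between $0$ and the positive l.s.c.\ functions $b-h$ and $h-a$) is exactly the Dowker--Kat\v{e}tov insertion theorem, valid precisely when $X$ is countably paracompact and normal; so your proof, like the paper's (whose appeal to Proposition \ref{prop:base} requires the same hypotheses), proves the lemma under the standing assumptions of the section in which it is applied, even though they are absent from the lemma's statement. What your route buys is worth recording: the fibrewise pinch at $x_k$ is precisely what makes the lower-Vietoris conditions survive for cusco maps, whose only relevant feature is $A(x_k)\neq\emptyset$; this is the real content behind the paper's final verification, since the stated inclusion $M_{f',g'}^+\cap C(X)\subseteq\U'\cap C(X)$ about continuous functions alone does not formally yield the cusco case --- one also needs the band over the chosen points to lie inside the boxes, which in the paper is implicit in how Theorem \ref{thm:VplusV} is proved and in your argument is explicit by construction. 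Your version also dispenses with disjointifying the points $x_k$ and the sets $U_k$ altogether (repeated points merely add finitely many conditions on $\delta$), while the paper's proof is shorter because it delegates the construction to the cited results.
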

	\begin{proof}
		W.l.o.g. we can suppose that $\U=W_0^+\cap W_1^-\cap .. \cap W_n^-$, where $W_k$ are open subsets of $X\times\R$. Choose $x_1,..,x_n$ so that $(x_k,h(x_k))\in W_k$. Let $x_1,..,x_{n'}$ be all those x's which are distinct (rearranged if necessary). For $k=1,..,n'$ we can choose open sets $U_k$ and open intervals $J_k$ so that $U_k$ are pairwise disjoint and $(x_k,h(x_k))\in U_k\times J_k\subseteq W_k$. Put $\U'=W_0^+\cap(U_1\times J_1)^-\cap..\cap(U_{n'}\times J_{n'})^-$ and observe that $h\in\U'\subseteq\U$. Since by Theorem \ref{thm:VplusV} we have that $\tau_V=\tau_V^+$ on $C(X)$, then using Proposition \ref{prop:base} there are $f',g'\in C(X)$ such that \[h\in M_{f',g'}^+\cap C(X)\subseteq \U'\cap C(X).\] One can verify that for any $f,g\in C(X)$ such that $f'<f<h<g<g'$ we have that $h\in M_{f,g}^+\subseteq\cl{M}_{f,g}^+\subseteq\U'.$
	\end{proof}
	\begin{theorem}
		Let $X$ be a countably paracompact normal space. Then $(L_0(X),\tau_V)$ and  $(MC(X),\tau_V)$ are Choquet.
	\end{theorem}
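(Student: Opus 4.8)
The plan is to imitate the winning tactics from the proof of Theorem \ref{thm:stCh}, adapting it from $\tau_V^+$ to the full Vietoris topology $\tau_V$ and from the strong Choquet game to the ordinary one. Let $Y$ denote either $L_0(X)$ or $MC(X)$. The crucial difference is that in the ordinary Choquet game player I supplies only an open set $\U_k\subseteq\V_{k-1}$ and no distinguished element, so I must manufacture such an element from density. Concretely, when player I plays $\U_k$, I first invoke Corollary \ref{cor:denseV}, which asserts that $C(X)$ is dense in $(Y,\tau_V)$, to pick some $h_k\in\U_k\cap C(X)$. Writing $\U_k=\U_k'\cap Y$ with $\U_k'$ open in $(L(X),\tau_V)$ and noting $h_k\in\U_k'\cap C(X)$, Lemma \ref{lem:almostbase} furnishes $f_k,g_k\in C(X)$ with $f_k<g_k$ and
\[ h_k\in M_{f_k,g_k}^+\subseteq\cl{M}_{f_k,g_k}^+\subseteq\U_k'. \]
Player II then answers with $\V_k=M_{f_k,g_k}^+$ (understood, as in the Remark, inside $Y$); this is a legitimate move since $\tau_V^+\subseteq\tau_V$, so $\V_k$ is $\tau_V$-open, and $\V_k\subseteq\cl{M}_{f_k,g_k}^+\subseteq\U_k$. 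Since $\V_k$ depends only on $\U_k$, this is in fact a tactics, matching the style of Theorem \ref{thm:stCh}.

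It remains to check that player II wins every run consistent with this rule. The rules of the game give $\U_{k+1}\subseteq\V_k$, whence $M_{f_{k+1},g_{k+1}}^+\subseteq\cl{M}_{f_{k+1},g_{k+1}}^+\subseteq\U_{k+1}\subseteq M_{f_k,g_k}^+$. From this inclusion I would deduce the pointwise nesting $f_k\le f_{k+1}\le g_{k+1}\le g_k$: if, say, $f_{k+1}(x_0)<f_k(x_0)$ at some $x_0$, then the continuous function $\phi=(1-t)f_{k+1}+tg_{k+1}$ for sufficiently small $t\in(0,1)$ satisfies $f_{k+1}<\phi<g_{k+1}$ everywhere yet $\phi(x_0)<f_k(x_0)$, so $\phi\in M_{f_{k+1},g_{k+1}}^+\setminus M_{f_k,g_k}^+$, contradicting the inclusion; the bound for the $g$'s is symmetric. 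Thus the compact intervals $[f_k(x),g_k(x)]$ decrease with $k$, so $F(x):=\bigcap_{k\in\omega_0}[f_k(x),g_k(x)]$ has nonempty values and is cusco as a decreasing intersection of cusco maps, exactly as in the proof of Theorem \ref{thm:stCh}. Hence $F$ contains a minimal cusco map $F^*$. For each $k$ we have $F^*(x)\subseteq F(x)\subseteq[f_k(x),g_k(x)]$, i.e. $F^*\in\cl{M}_{f_k,g_k}^+\subseteq\U_k$, and since $F^*\in MC(X)\subseteq Y$ this yields $F^*\in\bigcap_{k\in\omega_0}\U_k\neq\emptyset$, so player II wins.

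The main obstacle is precisely the lower part $\tau_V^-$ of the Vietoris topology, which is what forces the density detour and restricts us to the ordinary Choquet game. In $\tau_V^+$ the sets $M_{f,g}^+$ form a base (Proposition \ref{prop:base}) and one can trap any prescribed element in a thin closed band $\cl{M}_{f,g}^+$, giving the strong Choquet property. Under $\tau_V$ a basic neighbourhood also carries lower conditions $W_j^-$, and a genuinely multivalued element cannot in general be squeezed into such a band while still meeting all the $W_j$; this is why Theorem \ref{thm:stCh} has no direct $\tau_V$ analogue and we settle for the weaker Choquet property. The remedy, encapsulated in Lemma \ref{lem:almostbase}, is to anchor the band at a continuous function rather than at an arbitrary element, which is possible only because normality makes $C(X)$ dense. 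Checking that the lemma, proved in $(L(X),\tau_V)$, restricts correctly to the game space $Y$ and that the resulting bands nest as claimed is then the routine but essential bookkeeping.
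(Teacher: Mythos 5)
Your proposal is correct and follows essentially the same route as the paper's own proof: density of $C(X)$ via Corollary \ref{cor:denseV}, Lemma \ref{lem:almostbase} to produce the bands $M_{f_k,g_k}^+$, and the decreasing intersection $F(x)=\bigcap_k[f_k(x),g_k(x)]$ containing a minimal cusco map $F^*\in\bigcap_k\U_k$. The only differences are expository: you explicitly verify the pointwise nesting $f_k\le f_{k+1}\le g_{k+1}\le g_k$ (which the paper merely asserts, in strict form) and you spell out the subspace-topology bookkeeping, both of which are welcome but do not change the argument.
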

	\begin{proof}
		We will present a winning tactics for player II in the Choquet game on the space $(MC(X),\tau_V)$. Suppose that player I has chosen $\U_k\in\tau_V$ such that $\U_k\subseteq\V_{k-1},$ where $\V_{k-1}$ is the choice of player II from the previous turn. Since by Corollary \ref{cor:denseV} the set $C(X)$ is dense in $(MC(X),\tau_V)$ there is $h_k\in C(X)\cap\U_k$. Using Lema \ref{lem:almostbase} there are $f_k,g_k\in C(X)$ such that 
		\[h_k\in M_{f_k,g_k}^+\subseteq\cl{M}_{f_k,g_k}^+\subseteq\U_k.\]
		Player II chooses $\V_k=M_{f_k,g_k}$. We have constructed
		\[f_1<\ldots <f_k<\ldots<g_k<\ldots<g_1.\]
		Similarly as in the proof of Theorem \ref{thm:stCh} put $F(x):=\bigcap_{k\in\omega_0}[f_k(x),g_k(x)]\not=\emptyset$. Since $F$ is an intersection of a decreasing system of cusco maps, we have that  $F$ is cusco, therefore it contains a minimal cusco map $F^*\in\bigcap_{k\in\omega_0}\U_k$, which concludes the proof for the space $(MC(X),\tau_V)$. The proof for the space $(L_0(X),\tau_V)$ is analogous.
%		Since by Corollary \ref{cor:denseV} the set $C(X)$ is dense in $(L_0(X),\tau_V)$ then using the base from Proposition \ref{prop:base} there are
%		$f_k\in C(X)$ and $\varepsilon_k\in (0,2^{-k})$, $\varepsilon_k<\varepsilon_{k-1}$ such that \[M_{f_k-\varepsilon_k,f_k+\varepsilon_k}^+\subseteq \U_k.\]
%		Put $\V_k=M_{f_k-\varepsilon_k/2,f_k+\varepsilon_k/2}^+$ then from the Lemma \ref{lem:reg} we have
%		\[\cll(\V_k)\subseteq\cl{M}_{f_k-\varepsilon_k/2,f_k+\varepsilon_k/2}^+ \subseteq M_{f_k-\varepsilon_k,f_k+\varepsilon_k}^+\subseteq \U_k.\]
%		The sequence $(f_k)$ is $\tau_U-$Cauchy i.e. it has a limit $f\in C(X)$. It is easy to verify that\marginpar{netreba detailnejsie?}
%		\[f\in\bigcap_{k\in\omega}\U_k\] and we are done.
	\end{proof}
	Even though the Theorem \ref{thm:polish} is a result about metrizability of the space $(MC(X),\tau_V^+)$ we do not have the answer to the following problem.
	\begin{question}
		Characterize spaces $X$ for which the space $(MC(X),\tau_V^+)$ or $(MC(X),\tau_V)$ is metrizable.
	\end{question}
	In the rest of this section we collect some miscellaneous results concerning the above question.
	
	Let us define a (possibly infinite valued) metric $L$ on $MC(X)$ by $L(F,G)=\sup\{H(F(x),G(x)); x\in X\},$ where $H$ is the Hausdorff metric generated by the usual metric on $\R$. Denote by $\tau_L$ the topology generated by $L$.
	\begin{theorem}
		Let $X$ be a countably compact space. Then $\tau_V\subseteq\tau_L$.
	\end{theorem}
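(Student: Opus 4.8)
The plan is to prove the inclusion subbase by subbase: since $\tau_L$-openness is preserved under finite intersections and arbitrary unions, it suffices to show that each subbasic member of $\tau_V$ is $\tau_L$-open, i.e. contains an $L$-ball about every one of its points. By definition of the Vietoris subbase this means treating sets of the form $V^-$ and $V^+$ with $V$ open in $X\times\R$. The set $V^-$ requires no compactness at all: given $F\in V^-$, I would pick $(x_0,y_0)\in F\cap V$ and $\delta>0$ with $\{x_0\}\times(y_0-\delta,y_0+\delta)\subseteq V$; if $L(F,G)<\delta$ then $H(F(x_0),G(x_0))<\delta$, so some $y'\in G(x_0)$ has $|y'-y_0|<\delta$, whence $(x_0,y')\in G\cap V$ and $G\in V^-$. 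Thus $B_L(F,\delta)\subseteq V^-$.

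The heart of the matter is $V^+$, and this is exactly where countable compactness is used. Fixing $F\in V^+$, so $F\subseteq V$, I would produce a single $\varepsilon>0$ for which the vertical $\varepsilon$-enlargement $\{(x,y):\operatorname{dist}(y,F(x))<\varepsilon\}$ remains inside $V$. Granting this, the conclusion is immediate: if $L(F,G)<\varepsilon$ then $H(F(x),G(x))<\varepsilon$ for every $x$, so each $y\in G(x)$ satisfies $\operatorname{dist}(y,F(x))<\varepsilon$ and hence $(x,y)\in V$; therefore $G\subseteq V$, i.e. $B_L(F,\varepsilon)\subseteq V^+$.

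To obtain such a uniform $\varepsilon$ I would argue by contradiction, assuming that for each $n\in\omega_0$ there are $x_n\in X$ and $y_n\in\R$ with $\operatorname{dist}(y_n,F(x_n))<1/n$ but $(x_n,y_n)\notin V$. Countable compactness furnishes a cluster point $x^*$ of $(x_n)$. Writing the convex compact value $F(x^*)=[a^*,b^*]$, positivity of the distance from the compact set $F(x^*)$ to the closed complement of its open fiber $\{y:(x^*,y)\in V\}$ gives $r_0>0$ with $K:=[a^*-2r_0,\,b^*+2r_0]\subseteq\{y:(x^*,y)\in V\}$. The tube lemma then yields an open $O\ni x^*$ with $O\times K\subseteq V$, while upper semicontinuity of $F$ at $x^*$ yields an open $O'\ni x^*$ with $F(x)\subseteq(a^*-r_0,b^*+r_0)$ for all $x\in O'$. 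Since $x^*$ is a cluster point, some $n$ with $1/n<r_0$ has $x_n\in O\cap O'$; then $F(x_n)\subseteq(a^*-r_0,b^*+r_0)$ together with $\operatorname{dist}(y_n,F(x_n))<r_0$ forces $y_n\in(a^*-2r_0,b^*+2r_0)\subseteq K$, so $(x_n,y_n)\in O\times K\subseteq V$, contradicting $(x_n,y_n)\notin V$.

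The main obstacle is precisely this last step: converting the pointwise fact that each compact value $F(x)$ lies inside its open fiber into one enlargement constant that works simultaneously for all $x$. Countable compactness is exactly the hypothesis that prevents an escaping sequence $(x_n)$ along which the available vertical room shrinks to $0$, and the pairing of the tube lemma with upper semicontinuity is what transfers the room at the cluster point $x^*$ to its neighbours. I expect the remaining work—the elementary interval arithmetic relating $\operatorname{dist}(y,F(x))<\varepsilon$, the Hausdorff estimate $H(F(x),G(x))<\varepsilon$, and membership in $V$—to be entirely routine.
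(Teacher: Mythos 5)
Your proof is correct, and on the decisive half of the argument it takes a genuinely different route from the paper. Both proofs reduce $\tau_V\subseteq\tau_L$ to the two kinds of subbasic sets, treat $V^-$ in the same elementary way, and both aim at the same key fact for $V^+$: a single $\varepsilon>0$ such that the vertical $\varepsilon$-enlargement of $F$ stays inside $V$. The difference is how that uniform $\varepsilon$ is produced. The paper uses Proposition~\ref{prop:base} to insert continuous functions $g_1,g_2$ with $F\in M_{g_1,g_2}^+\subseteq V^+$, and then exploits the fact that the positive lower semicontinuous functions $\inf F-g_1$ and $g_2-\sup F$ are bounded away from zero on a countably compact space; the $L$-ball of radius $\varepsilon$ around $F$ then lands inside $M_{g_1,g_2}^+$. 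You instead argue by contradiction: a sequence $(x_n,y_n)$ defeating every clearance $1/n$, a cluster point $x^*$ of $(x_n)$ furnished by countable compactness, and then the tube lemma together with upper semicontinuity of $F$ at $x^*$ to show that $(x_n,y_n)$ must eventually lie in $V$ after all. What each approach buys: the paper's argument is shorter given the machinery it has already built, whereas yours is self-contained and, notably, avoids the hypotheses of Proposition~\ref{prop:base}, which is stated for countably paracompact \emph{normal} spaces --- a property that countable compactness alone does not guarantee (e.g.\ $\omega_1\times(\omega_1+1)$ is countably compact but not normal). So your cluster-point argument actually establishes the theorem exactly under its stated hypothesis, without any implicit appeal to normality.
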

	\begin{proof}%\marginpar{dal som to radsej pomocou tej bazy co uz pouzivame}
		We will prove that for every open $W\subseteq X\times R$ and every $F\in MC(X)$ such that $F\in W^+$ there is $\varepsilon>0$ such that $B^L_\varepsilon(F)\subseteq W^+$, where
		\[B^L_\varepsilon(F)=\{G\in MC(X);\ L(F,G)<\varepsilon\}.\]
		 Denote
		\[f_1(x)=\inf F(x),\quad  f_2(x)=\sup F(x)\]
		and observe that $f_1$ is l.s.c and $f_2$ is u.s.c. Using Proposition \ref{prop:base} there are $g_1,g_2\in C(X)$ such that $F\in M_{g_1,g_2}^+\subseteq W^+$. Functions $f_1-g_1$ and $g_2-f_2$ are positive valued and l.s.c. Since $X$ is countably compact, there is $\varepsilon>0$ such that $f_1-g_1>\varepsilon$ and $g_2-f_2>\varepsilon$. One can easily verify that $B^L_\varepsilon(F)$ is as desired.
		
		Now suppose $F\in W^-$, where $W$ is open. Choose $(x,y)\in F\cap W$. There is $\varepsilon>0$ such that $\{x\}\times (y-\varepsilon,y+\varepsilon)\subseteq W$. Again, one can easily verify that $B^L_\varepsilon(F)\subseteq W^-$.
	\end{proof}
	
	\begin{definition}[\cite{Stover2009}]
		Topological space $Y$ is said to be weakly $\pi-$metrizable iff it has a $\sigma-$disjoint $\pi-$base.
	\end{definition}
	Recall that a $\sigma-$disjoint system is a countable union of families consisting of pairwise disjoint sets.
	\begin{theorem}
		Let $X$ be a countably compact perfectly normal space. Then $(MC(X),\tau_V^+)$ is weakly $\pi-$metrizable
	\end{theorem}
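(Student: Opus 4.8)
The plan is to present $(MC(X),\tau_V^+)$ as a space with a dense metrizable subspace --- namely $C(X)$ --- and to transport a cleverly chosen $\sigma$-disjoint $\pi$-base of that subspace upward through the tube sets $M_{f,g}^+$, exploiting that the trace on $C(X)$ of a thin constant-radius tube is exactly a ball of the uniform metric. First I would assemble the standing reductions. A perfectly normal space is countably paracompact, so $X$ is countably paracompact and normal and every earlier result carrying that hypothesis applies. By Proposition \ref{prop:base} (taken with $D=C(X)$) the family $\{M_{f,g}^+;\,f,g\in C(X),\ f<g\}$ is a base of $(L(X),\tau_V^+)$, hence its trace is a base of $(MC(X),\tau_V^+)$; by Corollary \ref{cor:dense}, $C(X)$ is dense in $(MC(X),\tau_V^+)$; and since $X$ is countably compact, Theorems \ref{thm:CXfirst} and \ref{thm:VplusV} give $\tau_V^+=\tau_V=\tau_U$ on $C(X)$, so $(C(X),\tau_V^+)$ is metrizable by $d(f,g)=\sup_{x\in X}|f(x)-g(x)|$. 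I write $B_d(h,r)$ for the open $d$-balls.

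Next I would record two facts about the tubes. (a) For $h\in C(X)$ and $r>0$ one has $M_{h-r,h+r}^+\cap C(X)=B_d(h,r)$; the only nontrivial inclusion uses that $|k-h|$, being continuous on the countably compact space $X$, attains its supremum, so that pointwise smallness upgrades to $d(k,h)<r$. (b) $M_{h-r,h+r}^+\subseteq M_{h'-r',h'+r'}^+$ whenever $d(h,h')\le r'-r$ (a pointwise check); every nonempty basic tube $M_{f,g}^+$ contains a constant-radius tube, namely $M_{h-\delta,h+\delta}^+$ with $h=\tfrac{f+g}{2}$ and any rational $0<\delta\le\tfrac12\min_X(g-f)$, the minimum being positive and attained by countable compactness; and, crucially, two tubes whose traces on $C(X)$ are disjoint balls are themselves disjoint, since their intersection is $\tau_V^+$-open and would otherwise meet the dense set $C(X)$ inside both balls.

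Then comes the construction. For each $n\in\omega_0$ I would fix, by Zorn's lemma, a maximal $2^{-n}$-separated set $\mathcal{N}_n\subseteq(C(X),d)$; maximality forces $\mathcal{N}_n$ to be a $2^{-n}$-net. Put
\[\widetilde{\mathcal{D}}_n=\bigl\{M_{h-2^{-n-1},\,h+2^{-n-1}}^+;\ h\in\mathcal{N}_n\bigr\},\qquad \widetilde{\mathcal{D}}=\bigcup_{n\in\omega_0}\widetilde{\mathcal{D}}_n.\]
Distinct centres in $\mathcal{N}_n$ satisfy $d(h,h')\ge 2^{-n}=2^{-n-1}+2^{-n-1}$, so their traces are disjoint balls and, by (b), the tubes in $\widetilde{\mathcal{D}}_n$ are pairwise disjoint; thus $\widetilde{\mathcal{D}}$ is $\sigma$-disjoint. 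For the $\pi$-base property I take a nonempty $\tau_V^+$-open $\U$, shrink it first to a basic tube and then, by (b), to a constant-radius tube $M_{h_0-\delta,h_0+\delta}^+\subseteq\U$; choosing $n$ with $3\cdot2^{-n-1}\le\delta$ and $h\in\mathcal{N}_n$ with $d(h,h_0)<2^{-n}$ yields $d(h,h_0)+2^{-n-1}<3\cdot2^{-n-1}\le\delta$, whence $M_{h-2^{-n-1},h+2^{-n-1}}^+\subseteq M_{h_0-\delta,h_0+\delta}^+\subseteq\U$ by (b). This member of $\widetilde{\mathcal{D}}$ is nonempty, so $\widetilde{\mathcal{D}}$ is a $\sigma$-disjoint $\pi$-base and the space is weakly $\pi$-metrizable.

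The step I expect to be the main obstacle is manufacturing the disjointness. As $C(X)$ is generically infinite-dimensional, a single ball may contain infinitely many pairwise $2^{-n}$-separated functions, so there is no hope of colouring a net of radius-$2^{-n}$ balls into countably many disjoint subfamilies. The device that rescues the argument is to take the radius equal to \emph{half} the separation constant of the net: same-level balls are then disjoint for free, with no colouring, while refinement still succeeds after passing to a finer level, so the net property survives. The second delicate point --- that all of this lives a priori only inside the metric subspace --- is resolved by fact (a), where countable compactness of $X$ is exactly what identifies a thin tube's trace with a metric ball, together with the density of $C(X)$, which lets both disjointness and refinement descend to, and ascend from, genuine set-valued maps.
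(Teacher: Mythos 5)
Your proof is correct, and it takes a genuinely different route from the paper's. The paper's argument is a short citation chain: countable compactness plus perfect normality give first countability of $(MC(X),\tau_V^+)$ (Corollary \ref{cor:firstcount}), normality gives the dense metrizable subspace $C(X)$ (Corollary \ref{cor:dense}, with $(C(X),\tau_V^+)$ metrizable by Theorems \ref{thm:VplusV} and \ref{thm:CXfirst}), and then \cite[Theorem 2.6]{White1978} --- a first countable Hausdorff space with a dense metrizable subspace has a $\sigma$-disjoint $\pi$-base --- finishes the job. You instead reprove, in this concrete setting, the combinatorial core of that cited theorem: maximal $2^{-n}$-separated sets in $(C(X),\tau_U)$, half-radius tubes $M^+_{h-2^{-n-1},\,h+2^{-n-1}}$ whose traces on $C(X)$ are uniform balls (your fact (a), where countable compactness upgrades pointwise bounds to a supremum bound), disjointness lifted from $C(X)$ to $MC(X)$ by density, and the refinement estimate $d(h,h_0)<2^{-n}\le\delta-2^{-n-1}$; all of these steps check out. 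What the paper's route buys is brevity and reuse of a known theorem; what yours buys is a self-contained, explicit construction and, in fact, a slightly stronger result: since you never invoke first countability of $(MC(X),\tau_V^+)$, perfect normality is not actually needed --- countably compact plus normal suffices, because countable compactness already implies the countable paracompactness required by Proposition \ref{prop:base}. You could state this strengthening explicitly rather than routing through ``perfectly normal implies countably paracompact.''
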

	\begin{proof}
		Since $X$ is countably compact, then $(C(X),\tau_V)=(C(X),\tau_V^+)$ is metrizable. And since $X$ is also perfectly normal, then by Corollary \ref{cor:firstcount} the space $(MC(X),\tau_V^+)$ is first countable Hausdorff space, which by Corollary \ref{cor:dense} contains a dense metrizable subspace. Thus by \cite[Theorem 2.6]{White1978} the space $(MC(X),\tau_V^+)$ has a $\sigma-$disjoint $\pi-$base; i.e. it is weakly $\pi-$metrizable.
	\end{proof}
	\textbf{Acknowledgements.} Both authors were supported by VEGA 2/0006/16.


\begin{thebibliography}{10}
		
		\bibitem{Artico2008}
		G.~Artico, L.~Holá, U.~Marconi, and R.~Moresco.
		\newblock Approximation by continuous functions in the {Fell} topology.
		\newblock {\em Topology and its Applications}, 155(17):2150 -- 2157, 2008.
		\newblock Proceedings of the 2006 International Conference on Topology and its
		Applications.
		
		\bibitem{Beer1988}
		G.~Beer.
		\newblock On a theorem of {Cellina} for set-valued functions.
		\newblock {\em Rocky Mountain J. Math.}, 18:37--47, 1988.
		
		\bibitem{Borwein1991}
		J.M. Borwein.
		\newblock {\em Fixed point theory and applications}, volume 252 of {\em Pitman
			Res. Notes Math. Ser.}, chapter Minimal cuscos and subgradients of Lipschitz
		functions, pages 57--81.
		\newblock Longman, Harlow, 1991.
		
		\bibitem{Borwein1997}
		J.M. Borwein and W.B. Moors.
		\newblock Essentially smooth {Lipschitz} functions.
		\newblock {\em Journal of Functional Analysis}, 149:305--351, 1997.
		
		\bibitem{Cellin1970}
		A.~Cellina.
		\newblock A further result on the approximation of set valued mappings.
		\newblock {\em Rendiconti Acc. Naz. Lincei}, 48:412--416., 1970.
		
		\bibitem{Engelking1977}
		R.~Engelking.
		\newblock {\em General Topology}.
		\newblock PWN, Warszawa, 1977.
		
		\bibitem{Fabi1997}
		M.J. Fabian.
		\newblock {\em Gateaux differentiability of convex functions and topology}.
		\newblock Canad. Math. Soc. Ser. Monogr. Adv. Texts. John Wiley \& Sons Inc.,
		New York, 1997.
		
		\bibitem{Fuller1968}
		R.V. Fuller.
		\newblock Relations among continuous and various noncontinuous functions.
		\newblock {\em Pac. J. Math.}, 25(3):495--509, 1968.
		
		\bibitem{Hansard1970}
		J.~D. Hansard.
		\newblock Function space topologies.
		\newblock {\em Pacific J. Math.}, 35(2):381--388, 1970.
		
		\bibitem{Hola1987}
		Ľ. Holá.
		\newblock On relations approximated by continuous functions.
		\newblock {\em Acta Univ. Carolin. Math. Phys.}, 28:67--72, 1987.
		
		\bibitem{Hola1992}
		Ľ. Holá.
		\newblock Hausdorff metric on the space of upper semicontinuous multifunctions.
		\newblock {\em Rocky Mountain J. Math.}, 22:601--610, 1992.
		
		\bibitem{Hola2013}
		Ľ~Holá and D.~Holý.
		\newblock Spaces of lower semicontinuous set-valued maps.
		\newblock {\em Mathematica Slovaca}, 63(4):863--870, 2013.
		
		\bibitem{Hola2014}
		Ľ. Holá and D.~Holý.
		\newblock New characterizations of minimal cusco maps.
		\newblock {\em Rocky Mountain J. of Math.}, 44:1851--1866, 2014.
		
		\bibitem{Hola2008}
		Ľ. Holá, T.~Jain, and R.A. McCoy.
		\newblock Topological properties of the multifunction space {L(X)} of cusco
		maps.
		\newblock {\em Mathematica Slovaca}, 58(6):763--780, 2008.
		
		\bibitem{Hola2005}
		Ľ. Holá and R.A. McCoy.
		\newblock Relations approximated by continuous functions.
		\newblock {\em Proc. Amer. Math. Soc.}, 133(7):2173--2182, 2005.
		
		\bibitem{Hola2007a}
		Ľ. Holá and R.A. McCoy.
		\newblock Relations approximated by continuous functions in the {Vietoris}
		topology.
		\newblock {\em Fund. Math.}, 195:205--219, 2007.
		
		\bibitem{Hola2007}
		Ľ. Holá, R.A. McCoy, and J.~Pelant.
		\newblock Approximations of relations by continuous functions.
		\newblock {\em Topology and its Applications}, 154:2241--2247, 2007.
		
		\bibitem{Hola2012}
		Ľ. Holá and B.~Novotný.
		\newblock Subcontinuity.
		\newblock {\em Mathematica Slovaca}, 62:345--362, 2012.
		
		\bibitem{Hola2015}
		Ľ. Holá and L.~Zsilinszky.
		\newblock Completeness and related properties of the graph topology.
		\newblock {\em Topology Proceedings}, 46:1--14, 2015.
		
		\bibitem{Juhasz1980}
		I.~Juhász.
		\newblock {\em Cardinal Functions in Topology - Ten Years Later}.
		\newblock Matematisch Centrum, Amsterdam, 1980.
		
		\bibitem{Kechris1995}
		A.S. Kechris.
		\newblock {\em Classical Descriptive Set Theory}.
		\newblock Springer-Verlag, New York, 1995.
		
		\bibitem{Kempisty1932}
		S.~Kempisty.
		\newblock Sur les fonctions quasi-continues.
		\newblock {\em Fund. Math.}, 19:184--197, 1932.
		
		\bibitem{Lechicki1990}
		A.~Lechicki and S.~Levi.
		\newblock Extensions of semicontinuous multifunctions.
		\newblock {\em Forum Math.}, 2:341--360, 1990.
		
		\bibitem{McCoy2000}
		R.A. McCoy.
		\newblock Densely continuous forms in {Vietoris} hyperspaces.
		\newblock {\em Set-Valued Analysis}, 8:267--271, 2000.
		
		\bibitem{Michael1951}
		E.~Michael.
		\newblock Topologies on spaces of subsets.
		\newblock {\em Trans. Amer. Math. Soc.}, 71:152--182, 1951.
		
		\bibitem{Moors1995}
		W.B. Moors.
		\newblock A characterization of minimal subdifferential mappings of locally
		{Lipschitz} functions.
		\newblock {\em Set-Valued Analysis}, 3:129--141, 1995.
		
		\bibitem{Moors2006}
		W.B. Moors and S.~Somasundaram.
		\newblock A {Gateaux} differentiability space that is not weak {Asplund}.
		\newblock {\em Proc. Amer. Math. Soc.}, 134:2745--2754, 2006.
		
		\bibitem{Neubrunn1988}
		T.~Neubrunn.
		\newblock Quasi-continuity.
		\newblock {\em Real Anal. Exchange}, 14:259--306, 1988.
		
		\bibitem{Coban1989}
		M.M. Čoban, P.S. Kenderov, and J.P. Revalski.
		\newblock Generic well-posedness of optimization problems in topological
		spaces.
		\newblock {\em Mathematika}, 36:310--324, 1989.
		
		\bibitem{Phelps1993}
		R.R. Phelps.
		\newblock {\em Convex Functions, Monotone Operators and Differentiability},
		volume 1364 of {\em Lecture Notes in Mathematics}.
		\newblock Springer Berlin Heidelberg, 1993.
		
		\bibitem{Stover2009}
		D.~Stover.
		\newblock On $\pi-$metrizable spaces, their continuous images and products.
		\newblock {\em Comment. Math. Univ. Carolinae}, 50(1):153--162, 2009.
		
		\bibitem{White1978}
		H.E. White.
		\newblock First countable spaces that have countable pseudo-bases.
		\newblock {\em Canad. Math. Bull.}, 21:103--112, 1978.
		
	\end{thebibliography}
\end{document}